\newtheorem{thm}{Theorem}[section]
\newtheorem{lemma}{Lemma}[section]
\newtheorem{rmk}{Remark}[section]
\newtheorem{cor}{Corollary}[section]
\newcommand{\p}{\mathbb{P}}
\newcommand{\e}{\mathbb{E}}
\newcommand{\bi}{\mathbf{i}}
\begin{document}
\title{Some Moderate Deviations for Ewens-Pitman Sampling Model}
\author{Youzhou Zhou}
\address{Department of Mathematical Science\\
Xi'an Jiaotong-Liverpool University\\
111 Ren'ai Road,
Suzhou Dushu Lake Science and Education Innovation District,
Suzhou Industrial Park,
Suzhou, 
P. R. China
215123}
\email{youzhou.zhou@xjtlu.edu.cn}
\date{\today}
\thanks{}
\subjclass[2010]{Primary 60F10;secondary 60C05}
\keywords{Random Partitions,Moderate deviations, }
\begin{abstract}
Ewens-Pitman model has been successfully applied to various fields including Bayesian statistics. There are four important estimators $K_{n},M_{l,n}$,$K_{m}^{(n)},M_{l,m}^{(n)}$. In particular, $M_{1,n}, M_{1,m}^{(n)}$ are related to discovery probability. Their asymptotic behavior, such as large deviation principle, has already been discussed in \cite{MR1661315},\cite{MR3167885} and \cite{MR3335831}. Moderate deviation principle is also discussed in \cite{MR3850070} with some speed restriction. In this article, we will apply complex asymptotic analysis to show that this speed restriction is unnecessary. 
\end{abstract}
\maketitle
\allowdisplaybreaks
\section{Introduction}
Ewens-Pitman sampling model proposed by \cite{MR1337249} generates an exchangeable random partition \cite{MR509954}. Due to Kingman's correspondence theorem (or de Finetti theorem), each exchangeable random partition structure associate a random probability, which serves as a prior distribution in nonparametric Bayesian statistics.
Therefore, Ewens-Pitman sampling model has found its applications in various fields including machine learning theory. To define this model, let a polish space $\mathbb{X}$ be type space, and a diffuse probability $\nu$ be type distribution. For $\alpha\in(0,1),\theta>-\alpha$, we define a sampling sequence $(X_{i})_{i\geq1}$ as follows:
 \begin{align*}
 \p(X_{1}\in\cdot)=&\nu(\cdot)\\
 \p(X_{n+1}\in\cdot\mid X_{1},\cdots,X_{n})=&\frac{\theta+j\alpha}{\theta+n}\nu(\cdot)+\frac{1}{\theta+n}\sum_{l=1}^{j}(n_{l}-\alpha)\delta_{X_{l}^{*}}(\cdot)
 \end{align*}
 where $X_{1}^{*},\cdots,X_{l}^{*}$ are the distinctive values of finite sample $X_{1},\cdots,X_{n}$, and $N_{n}=(n_{1},\cdots,n_{l})$ are the type frequencies. For a given sample $X_{1},\cdots,X_{n}$, we define $K_{n}$ to be the number of different types, and $M_{l,n}$
 to be the number of types with frequency $l$. If we consider an extra sample $X_{n+1},\cdots,X_{n+m}$, we can similarly define $K_{m}^{(n)}$ to be the total number of brand new types(no appearance in $X_{1},\cdots,X_{n}$) and $M_{l,m}^{(n)}$ to be the number of brand new types with frequency $l$ in the extra sample.  In \cite{MR2434179} Ewens-Pitman model, or more generally Gibbs-type random partition model \cite{MR2160320},
 was applied to the analysis of expressed sequence tags. Conditioning on sample $X_{1},\cdots,X_{n}$, one can show that $K_{m}^{(n)}$ and  $M_{l,m}^{(n)}$ are posterior estimators. In particular, $M_{1,m}^{(n)}$ corresponds to discovery probability like Turing-Good estimator. Note that $K_{m}^{(n)}$ and  $M_{l,m}^{(n)}$ do not count the types appear in sample $X_{1},\cdots,X_{n}$. If we do consider those types, then we have 
 $\tilde{K}_{m}^{(n)}$ and $\tilde{M}_{l,m}^{(n)}$. However, $0\leq \tilde{K}_{m}^{(n)}-K_{m}^{(n)}\leq n$ and 
 $0\leq \tilde{M}_{l,m}^{(n)}-M_{l,m}^{(n)}\leq n$, so $K_{m}^{(n)}$ and  $M_{l,m}^{(n)}$ have the same asymptotic behavior as  $\tilde{K}_{m}^{(n)}$ and $\tilde{M}_{l,m}^{(n)}$.
 
 The fluctuation theorem of these four quantities $K_{n},M_{l,n},K_{m}^{(n)},M_{l,m}^{(n)}$ have already been obtained in \cite{MR2245368} and \cite{MR3167885}, the fluctuation scale is $n^\alpha$ for $K_{n},M_{l,n}$, and $m^\alpha$ for $K_{m}^{(n)},M_{l,m}^{(n)}$. When we consider scales $n$ and $m$, then
 \begin{equation}
 \lim_{n\to\infty}\frac{K_{n}}{n}= \lim_{n\to\infty}\frac{M_{l,n}}{n}=0,\mbox{ and }  \lim_{m\to\infty}\frac{K_{m}^{(n)}}{m}= \lim_{n\to\infty}\frac{M_{l,m}^{(m)}}{m}=0 \label{LLN}
 \end{equation}
 The corresponding large deviation principle (LDP for short) of (\ref{LLN}) have been discussed thoroughly in \cite{MR1661315},\cite{MR3167885} and \cite{MR3335831}. Moreover, the moderate deviation principle (MDP for short) are also discussed in \cite{MR3850070}. The MDP of $K_{n},M_{l,n}$ has scale $n^{\alpha}\beta_{n}$
  where, however, $\lim_{n\to\infty}\frac{\beta_{n}}{\log^{1-\alpha} n}=\infty$, and there are also similar results for MDP of $K_{m}^{(n)},M_{l,m}^{(n)}$. Usually the MDP scale should be between fluctuation theorem and LDP, i.e.
 $
 n^\alpha \ll n^{\alpha}\beta_{n}\ll n.
 $
 In this sense, the restriction $\lim_{n\to\infty}\frac{\beta_{n}}{\log^{1-\alpha} n}=\infty$ seems unnatural. In this article, we will show that this restriction is actually unnecessary. To establish MDP, we will adopt the scheme in \cite{MR3850070}.
 First, we find the asymptotic log-Laplace transform; then we apply G\"artner-Ellis theorem to complete the proof. Though the moment generating function of $K_{n},M_{l,n},K_{m}^{(n)},M_{l,m}^{(n)}$ have
  explicit series expansion, we can not apply truncation in asymptotic analysis. Therefore, we manage to find a closed integral representation for their moment generating functions.Then we can apply complex asymptotic analysis to these integral representations to 
  get limiting log-Laplace transform. In \cite{MR3850070}, the upper bound and lower bound estimation scheme seems hard to establish limiting log-Laplace transform when $\beta_{n}$ does not satisfy $\lim_{n\to\infty}\frac{\beta_{n}}{\log^{1-\alpha} n}=\infty$.

  This article will focus on the establishment of limiting log-Laplace transform. In section two, we will present the main results and some discussion. In section three, we will provide details of derivations.

\section{Main Results}
By the arguments in \cite{MR3850070}, it only suffices to establish the MPDs of $K_{n},M_{l,n},K_{m}^{(n)},M_{l,m}^{(n)}$ when $\theta=0$. Then, we need to evaluate the following limiting log-Laplace transforms 
\begin{align*}
\psi(\lambda)=&\lim_{n\to\infty}\frac{1}{\beta_{n}^{1/(1-\alpha)}}\log\e\exp\left\{\lambda \frac{\beta_{n}^{1/(1-\alpha)}K_{n}}{n^\alpha\beta_{n}}\right\}\\
\psi_{l}(\lambda)=&\lim_{n\to\infty}\frac{1}{\beta_{n}^{1/(1-\alpha)}}\log\e\exp\left\{\lambda \frac{\beta_{n}^{1/(1-\alpha)} M_{l,n}}{n^\alpha\beta_{n}}\right\}\\
\psi^{(n)}(\lambda)=&\lim_{m\to\infty}\frac{1}{\beta_{m}^{1/(1-\alpha)}}\log\e\left[\exp\left\{\lambda \beta_{m}^{1/(1-\alpha)} \frac{K_{m}^{(n)}}{m^\alpha\beta_{m}}\right\}\mid \{K_{n}=j,N_{n}=n\}\right]\\
\psi_{l}^{(n)}(\lambda)=&\lim_{m\to\infty}\frac{1}{\beta_{m}^{1/(1-\alpha)}}\log\e\left[\exp\left\{\lambda \beta_{m}^{1/(1-\alpha)}\frac{M_{l,m}^{(n)}}{m^\alpha\beta_{m}}\right\}\mid \{K_{n}=j,N_{n}=n\}\right].
\end{align*}
In \cite{MR1661315},\cite{MR3167885},\cite{MR3335831}, the Laplace transform of $K_{n},M_{l,n},K_{m}^{(n)},M_{l,m}^{(n)}$ has explicit power series expansion.
\begin{lemma}
When $\theta=0$, we have
\begin{align*}
\phi_{n}=&\e\exp\left\{\lambda \frac{\beta_{n}^{1/(1-\alpha)}K_{n}}{n^\alpha\beta_{n}}\right\}=\sum_{i=0}^{\infty}y_{n}^{i}\binom{\alpha i+n-1}{n-1}\\
\phi_{l,n}=&\e\exp\left\{\lambda \frac{\beta_{n}^{1/(1-\alpha)} M_{l,n}}{n^\alpha\beta_{n}}\right\}=\sum_{i=0}^{[n/l]}y_{l,n}^i\frac{n}{n-il+i\alpha}\binom{n-il+i\alpha}{n-il}\\
\phi_{m}^{(n)}=&\e\left[\exp\left\{\lambda \beta_{m}^{1/(1-\alpha)} \frac{K_{m}^{(n)}}{m^\alpha\beta_{m}}\right\}\mid \{K_{n}=j,N_{n}=n\}\right]\\
=&(1-y_{m})^{j}\sum_{i=0}^{\infty}(y_{m})^i\frac{j_{(i)}}{i!}\frac{\binom{n+i\alpha+m-1}{n+m-1}}{\binom{n+i\alpha-1}{n-1}}\\
\phi_{l,m}^{(n)}=&\e\left[\exp\left\{\lambda \beta_{m}^{1/(1-\alpha)}\frac{M_{l,m}^{(n)}}{m^\alpha\beta_{m}}\right\}\mid \{K_{n}=j,N_{n}=n\}\right]\\
=&\frac{m!}{n_{(m)}}\sum_{i=0}^{[m/l]}(y_{l,m})^i\binom{j+i-1}{i}\binom{n+m-il+i\alpha-1}{n+i\alpha-1}
\end{align*}
where $y_{n}=1-\exp\{-\lambda\beta_{n}^{\alpha/(1-\alpha)}/n^{\alpha}\}, y_{l,n}=\frac{\alpha(1-\alpha)_{(l-1)}}{l!}\frac{y_{n}}{1-y_{n}}$, and $a_{(k)}=\frac{\Gamma(a+k)}{\Gamma(a)}$.
\end{lemma}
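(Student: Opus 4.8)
The four identities are, up to a change of variables, exactly the explicit power-series expansions of the Laplace transforms of $K_n,M_{l,n},K_m^{(n)},M_{l,m}^{(n)}$ recorded in \cite{MR1661315,MR3167885,MR3335831}, specialised to $\theta=0$; the only bookkeeping to add is the reparametrisation in terms of $y_n,y_{l,n},y_m,y_{l,m}$. A self-contained derivation would run as follows.

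\emph{Normalising the exponent.} Since $\beta_n^{1/(1-\alpha)}/(n^\alpha\beta_n)=\beta_n^{\alpha/(1-\alpha)}/n^\alpha$, putting $s_n:=\lambda\beta_n^{\alpha/(1-\alpha)}/n^\alpha$ turns each left-hand side into an ordinary moment generating function (e.g.\ $\phi_n=\e e^{s_nK_n}$), and the definition of $y_n$ reads $e^{-s_n}=1-y_n$, so $e^{s_n}=(1-y_n)^{-1}$ and $e^{s_n}-1=y_n/(1-y_n)$; the same holds with $m$ in place of $n$ for the conditional transforms.

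\emph{The unconditional transforms.} For $\phi_n$ and $\phi_{l,n}$ I would start from the Ewens--Pitman sampling formula at $\theta=0$ \cite{MR1337249}: a set partition of $[n]$ into $k$ blocks of sizes $n_1,\dots,n_k$ has probability $\frac{\alpha^{k-1}(k-1)!}{(n-1)!}\prod_{r}(1-\alpha)_{(n_r-1)}$. Summing over block-size configurations with the elementary identity $\sum_{m\ge1}\frac{(1-\alpha)_{(m-1)}}{m!}x^m=\frac{1-(1-x)^\alpha}{\alpha}$ gives $\p(K_n=k)=\frac{\alpha^{k-1}(k-1)!}{(n-1)!}\frac{n!}{k!}[x^n]\bigl(\tfrac{1-(1-x)^\alpha}{\alpha}\bigr)^k$, whence, after summing the logarithmic series in $k$, $\phi_n=-\frac{n}{\alpha}[x^n]\log\bigl((1-x)^\alpha-y_n\bigr)$; the same computation with the size-$l$ term of the generating function marked by $e^{s_n}$ gives $\phi_{l,n}=-\frac{n}{\alpha}[x^n]\log\bigl((1-x)^\alpha-y_{l,n}x^l\bigr)$, the marked coefficient $[x^l]\bigl(1-(1-x)^\alpha\bigr)=\frac{\alpha(1-\alpha)_{(l-1)}}{l!}$ times $e^{s_n}-1$ being precisely $y_{l,n}$. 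One then extracts the coefficient of $x^n$: differentiating the logarithm, expanding $\bigl((1-x)^\alpha-y\bigr)^{-1}$ as the geometric series $\sum_i y^i(1-x)^{-\alpha(i+1)}$, and reading off powers of $x$ with the generalized binomial theorem yields $\phi_n=\sum_i\binom{\alpha i+n-1}{n-1}y_n^i$ and, for $\phi_{l,n}$, a combination of two binomials which collapses to $\frac{n}{n-il+i\alpha}\binom{n-il+i\alpha}{n-il}$ via the absorption identity $\frac{n}{\alpha i}\binom{\alpha i+n-il-1}{n-il}=\frac{n}{n-il+i\alpha}\binom{n-il+i\alpha}{n-il}$.

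\emph{The conditional transforms.} For $\phi_m^{(n)}$ and $\phi_{l,m}^{(n)}$ I would replace the sampling formula by the predictive (posterior) one given $\{K_n=j,N_n=n\}$: since the $(\alpha,0)$ model is of Gibbs type, the partition that the next $m$ draws induce on the brand-new types is again Gibbs-type, now with an ``$\alpha$-weighted seed'' of $n$ earlier customers at $j$ earlier blocks, so that the relevant weights are $(n+i\alpha)_{(m)}/n_{(m)}=\binom{n+i\alpha+m-1}{n+m-1}/\binom{n+i\alpha-1}{n-1}$, the Pochhammer factor $j_{(i)}/i!$ (resp.\ $\binom{j+i-1}{i}$) counts how $i$ new blocks attach to the seed, and $(1-y_m)^j=e^{-js_m}$ (resp.\ $m!/n_{(m)}$) is the normalising constant. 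Running the same series collapse and coefficient extraction then produces the stated sums. I expect this conditional case to be the main obstacle: the unconditional transforms are routine once the generating function $\frac{1-(1-x)^\alpha}{\alpha}$ is in hand, but deriving the correct predictive partition structure and propagating the combinatorial factors $j_{(i)}/i!$, $\binom{j+i-1}{i}$, $(1-y_m)^j$ and $m!/n_{(m)}$ through the resummation is where the care is needed.
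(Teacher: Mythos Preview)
Your proposal is correct and aligned with the paper's treatment: the paper does not actually prove this lemma but simply records it as known, attributing the explicit series expansions to \cite{MR1661315}, \cite{MR3167885} and \cite{MR3335831}, precisely as you do in your opening sentence. Your additional self-contained sketch via the exponential generating function $\frac{1-(1-x)^\alpha}{\alpha}$ and coefficient extraction therefore goes beyond what the paper itself supplies; it is not needed to match the paper, though it is a reasonable outline if an independent derivation were desired.
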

\begin{rmk}
\item[1.] Note that 
\begin{align*}
\psi(\lambda)=&\lim_{n\to\infty}\frac{1}{\beta_{n}^{1/(1-\alpha)}}\log \phi_{n}\\\psi_{l}(\lambda)=&\lim_{n\to\infty}\frac{1}{\beta_{n}^{1/(1-\alpha)}}\log \phi_{l,n}\\
\psi^{(n)}(\lambda)=&\lim_{m\to\infty}\frac{1}{\beta_{m}^{1/(1-\alpha)}}\log \phi_{m}^{(n)}\\
\psi_{l}^{(n)}(\lambda)=&\lim_{m\to\infty}\frac{1}{\beta_{m}^{1/(1-\alpha)}}\log \phi_{l,m}^{(n)}
\end{align*}
\item[2.] Because $1\leq\frac{n}{n-il+i\alpha}\leq\frac{n}{n-\frac{n}{l}l+\frac{n}{l}\alpha}=\frac{l}{\alpha}$, so
$
\psi_{l}(\lambda)=\lim_{n\to\infty}\frac{1}{\beta_{n}^{1/(1-\alpha)}}\log \tilde{\phi}_{l,n},
$
where 
$$\tilde{\phi}_{l,n}=\sum_{i=0}^{[n/l]}(y_{l,n})^i\binom{n-il+i\alpha}{n-il}.
$$
\end{rmk}

Since we can not apply truncation techniques to the above series expansion, we manage to find close integral representations. One can observe that $\binom{k+n-1}{n-1}=\frac{k(k+1)\cdots(k+n-1)}{(n-1)!}=\frac{1}{2\pi \bi}\int_{C}\frac{\xi^{k+n-1}d\xi}{(\xi-1)^n}$, where $C$ is a small circle center at $\xi=1$. Then one can replace all combinatorial coefficients by complex integrals. By appropriately choosing integral contours, we can switch summation and integration. Thus, we have the following integral representations.

\begin{thm}\label{integral}
\begin{align*}
\phi_{n}=&\frac{1}{2\pi \bi}\int_{C}\frac{\xi^{n-1}d\xi}{(\xi-1)^n(1-y_{n}\xi^\alpha)}\\
\tilde{\phi}_{l,n}=&\frac{1}{2\pi \bi}\int_{C}\frac{\xi^{n}d\xi}{(\xi-1)^{n+1}[1-y_{l,n}\xi^{\alpha-l}(\xi-1)^l]}\\
\phi_{m}^{(n)}=&\frac{(1-y_{m})^j}{\binom{n+m-1}{m}}\frac{1}{2\pi \bi}\int_{C}\frac{\xi^{n+m-1}}{(\xi-1)^{m+1}}
\frac{d\xi}{(1-y_{m}\xi^\alpha)^j}\\
\phi_{l,m}^{(n)}=&\frac{m!}{(n)_{(m)}}\frac{1}{2\pi \bi}\int_{C}\frac{\xi^{n+m-1}d\xi}{
(\xi-1)^{m+1}[1-y_{l,m}\xi^{\alpha-l}(\xi-1)^l]^j}
\end{align*}
where the contour $C$ should be a small circle with center $\xi=1$ enclosing the only singularity $\xi=1$.
\end{thm}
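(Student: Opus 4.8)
\emph{Proof plan.} The plan is to derive all four identities from one and the same mechanism, varying only the bookkeeping. The single analytic ingredient is the elementary representation
\[
\binom{k+N-1}{N-1}=\frac{1}{2\pi\bi}\int_{C}\frac{\xi^{\,k+N-1}}{(\xi-1)^{N}}\,d\xi ,
\]
valid for every positive integer $N$ and every real $k\ge 0$ -- not merely for integer $k$ -- because $\xi\mapsto\xi^{\,k+N-1}$ is holomorphic on $\mathbb{C}\setminus(-\infty,0]$, hence on a neighbourhood of the closed disk bounded by a small circle $C$ centred at $\xi=1$; the identity is then just Cauchy's differentiation formula $\frac{1}{2\pi\bi}\int_{C}\frac{f(\xi)}{(\xi-1)^{N}}\,d\xi=\frac{f^{(N-1)}(1)}{(N-1)!}$ applied to $f(\xi)=\xi^{\,k+N-1}$, whose $(N-1)$-st derivative at $\xi=1$ is $(k+1)(k+2)\cdots(k+N-1)=(k+1)_{(N-1)}$. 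I would record this as a one-line lemma.

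Next I would substitute this representation for every binomial coefficient appearing in the power-series expansions above. For $\phi_{n}$ take $N=n$, $k=\alpha i$; for $\tilde\phi_{l,n}$ take $N=n-il+1$, $k=i\alpha$; for $\phi_{l,m}^{(n)}$ first rewrite $\binom{n+m-il+i\alpha-1}{n+i\alpha-1}=\binom{n+m-il+i\alpha-1}{m-il}$ and then take $N=m-il+1$, $k=n+i\alpha-1$. The case $\phi_{m}^{(n)}$ needs one preliminary algebraic step: the ratio of binomials telescopes,
\[
\frac{\binom{n+i\alpha+m-1}{n+m-1}}{\binom{n+i\alpha-1}{n-1}}=\frac{(n+i\alpha)_{(m)}}{(n)_{(m)}}=\frac{1}{\binom{n+m-1}{m}}\binom{n+i\alpha+m-1}{m},
\]
after which one applies the representation with $N=m+1$, $k=n+i\alpha-1$; the factor $1/\binom{n+m-1}{m}$ is exactly the one occurring in the claimed formula, and $j_{(i)}/i!=\binom{j+i-1}{i}$. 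In every case, pulling the $i$-independent factor (namely $\xi^{n-1}/(\xi-1)^{n}$, or $\xi^{n+m-1}/(\xi-1)^{m+1}$, etc.) out of the integral turns the $i$-th summand into a fixed binomial weight times $\big[y\,\xi^{\alpha}\big]^{i}$ (for $K_{n},K_{m}^{(n)}$) or times $\big[y\,\xi^{\alpha-l}(\xi-1)^{l}\big]^{i}$ (for $M_{l,n},M_{l,m}^{(n)}$), where $y$ stands for the relevant $y_{n}$, $y_{l,n}$, $y_{m}$ or $y_{l,m}$.

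Then I would interchange summation and integration and sum the series. On a circle $C$ of radius $r$ about $1$ one has $|\xi^{\alpha}|\le(1+r)^{\alpha}$ and $|\xi^{\alpha-l}(\xi-1)^{l}|\le(1-r)^{\alpha-l}r^{l}$; since the defining power series converge only for $|y|<1$, taking $r$ small makes both quantities $<1/|y|$ uniformly on the closed disk, and the Weierstrass $M$-test -- the weights $\binom{j+i-1}{i}$ grow only polynomially in $i$ -- justifies term-by-term integration. Summing $\sum_{i}x^{i}=(1-x)^{-1}$ and $\sum_{i}\binom{j+i-1}{i}x^{i}=(1-x)^{-j}$ then produces the four displayed integrands. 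For $\tilde\phi_{l,n}$ and $\phi_{l,m}^{(n)}$, whose original sums terminate at $i=[n/l]$ resp.\ $i=[m/l]$ whereas the geometric/binomial series are infinite, extending the sum is harmless: for $i\ge[n/l]+1$ one has $il\ge n+1$ (resp.\ $il\ge m+1$), so each extra term equals a constant times $\xi^{\,n+i(\alpha-l)}(\xi-1)^{\,il-n-1}$ (resp.\ a constant times $\xi^{\,n+m-1+i(\alpha-l)}(\xi-1)^{\,il-m-1}$) with a nonnegative power of $\xi-1$, hence is holomorphic inside $C$ and contributes $0$ by Cauchy's theorem. A small enough $r$ also keeps $1-y\xi^{\alpha}$ and $1-y\xi^{\alpha-l}(\xi-1)^{l}$ nonvanishing on the closed disk, so $\xi=1$ is the only enclosed singularity, exactly as the statement requires.

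I do not expect a serious obstacle: the whole argument is ``represent, swap, sum.'' The two spots that need genuine care are (a) the telescoping of the double binomial in the $\phi_{m}^{(n)}$ case into a single binomial with the correct prefactor, and (b) the truncation argument for $\tilde\phi_{l,n}$ and $\phi_{l,m}^{(n)}$, where one must verify that the cutoff $[n/l]$ (resp.\ $[m/l]$) is precisely the index beyond which the power of $\xi-1$ in the numerator overtakes that in the denominator, so that the tail terms are holomorphic at $\xi=1$. Everything else -- existence of a good radius $r$, uniform convergence, and the single-singularity requirement -- is routine once $|y|<1$ is in force.
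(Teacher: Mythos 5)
Your proposal is correct, and for $\phi_{n}$ and $\tilde{\phi}_{l,n}$ it coincides with the paper's proof: the same contour representation of the generalized binomial coefficient, the same interchange of sum and integral justified by making $|y\,\xi^{\alpha}|$ (resp.\ $|y\,\xi^{\alpha-l}(\xi-1)^{l}|$) small on a small circle about $1$, and the same observation that beyond $i=[n/l]$ the power of $\xi-1$ becomes nonnegative so the tail is holomorphic (the paper phrases this by summing the finite geometric series exactly and showing the remainder integrates to zero; you extend the sum and kill each extra term by Cauchy's theorem — trivially equivalent). Where you genuinely diverge is in the posterior cases $\phi_{m}^{(n)}$ and $\phi_{l,m}^{(n)}$: the paper represents the factor $j_{(i)}/i!=\binom{j+i-1}{i}$ by a \emph{second} contour integral $\frac{1}{2\pi\bi}\int_{C_{2}}\xi_{2}^{j+i-1}(\xi_{2}-1)^{-i-1}d\xi_{2}$, obtains a double integral, sums a plain geometric series inside it, and then recovers the $j$-th power in the denominator by evaluating the inner $\xi_{2}$-integral at its unique enclosed pole. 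You instead absorb this factor directly via the negative binomial series $\sum_{i}\binom{j+i-1}{i}x^{i}=(1-x)^{-j}$, which produces the $j$-th power immediately and keeps everything a single one-dimensional contour integral. Your route is the more elementary of the two — it avoids the bookkeeping of choosing the second radius $r_{2}$ and verifying which singularities of the $\xi_{2}$-integrand lie inside $C_{2}$ — at the modest cost of having to note (as you do) that the polynomially growing weights $\binom{j+i-1}{i}$ do not spoil the Weierstrass $M$-test. Your telescoping of the binomial ratio for $\phi_{m}^{(n)}$ is exactly the paper's preliminary algebraic step, so no content is lost.
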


To evaluate the above complex integral, we should conduct contour deformation. For $\phi_{n},\phi_{l,n}$, the absolute value of the integrand approach $0$ as $|\xi|\to\infty$; for $\phi_{m}^{(n)},\phi_{l,m}^{(n)}$, the absolute value of the integrand approach $+\infty$ as $|\xi|\to\infty$. Therefore, the integrand is analytic at $\infty$ for $\phi_{n},\phi_{l,n}$, while the integrand is not analytic at $\infty$ for $\phi_{m}^{(n)},\phi_{l,m}^{(n)}$.Therefore, we are going to deform the contour $C$ to Hankel's contour(Figure \ref{dc}) for $\phi_{n},\phi_{l,n}$ and to steepest descent contour(Figure \ref{dc}) for $\phi_{m}^{(n)},\phi_{l,m}^{(n)}$. The precise definition of steepest descent contour is $\arg(\frac{\xi^{n+m-1}}{(\xi-1)^{m+1}})=0$. To choose such a contour is because $$
    \frac{\xi^{n+m-1}}{(\xi-1)^{m+1}}=\left|\frac{\xi^n+m-1}{(\xi-1)^{m+1}}\right|\exp\left\{\bi\arg{\frac{\xi^{n+m-1}}{(\xi-1)^{m+1}}}\right\}.
     $$
If $\arg(\frac{\xi^{n+m-1}}{(\xi-1)^{m+1}})\neq0$, then as $m\to\infty$,
$$
\exp\left\{\bi\arg{\frac{\xi^{n+m-1}}{(\xi-1)^{m+1}}}\right\}
$$
will produce many cancellation, very much the alternating series. The cancellation will prevent us from obtaining the precise asymptotic behavior of the alternating series unless we can somehow transform it into a positive series.
\begin{figure}
  \centering
  \includegraphics[width=0.4\textwidth,height=3in]{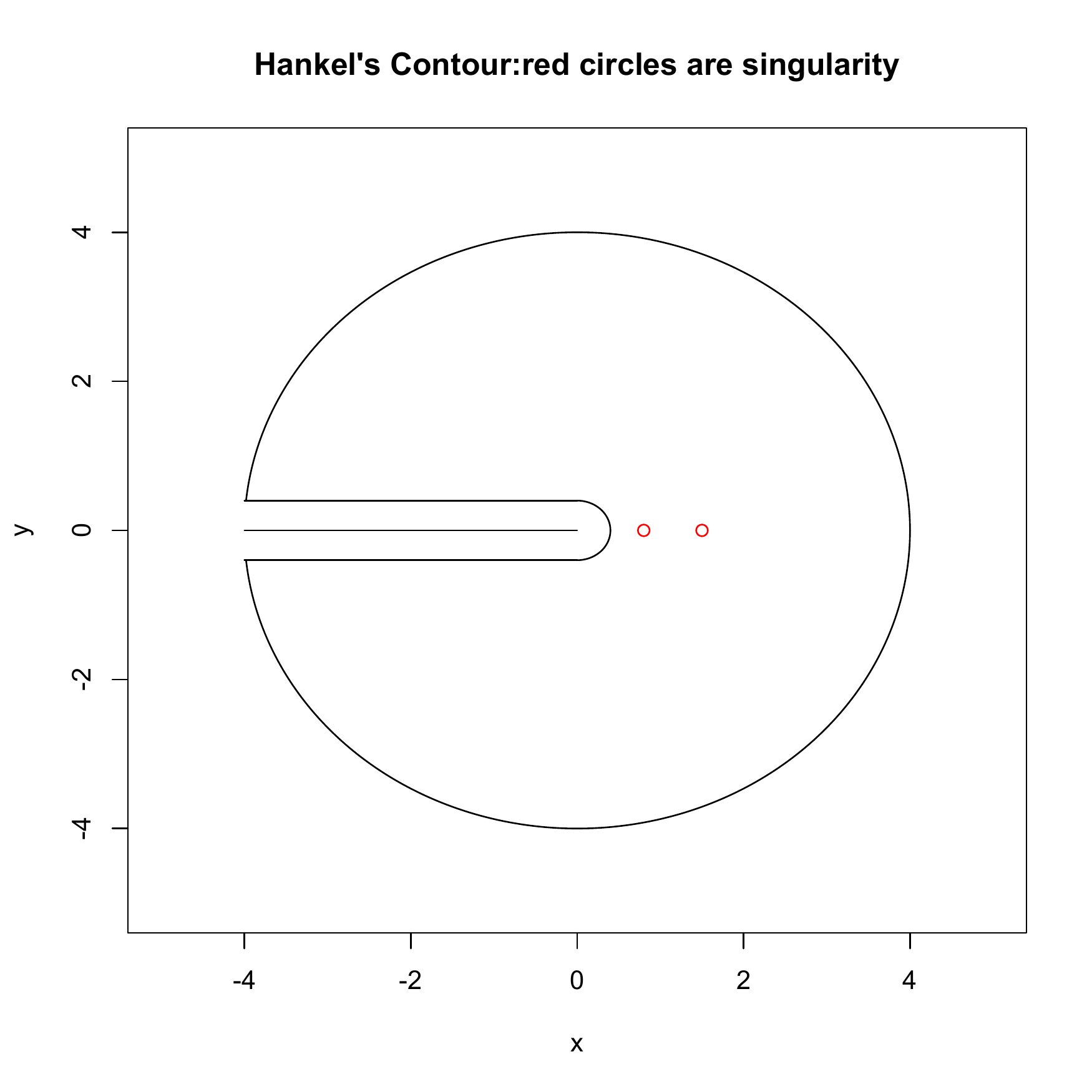}
  \includegraphics[width=0.4\textwidth,height=3in]{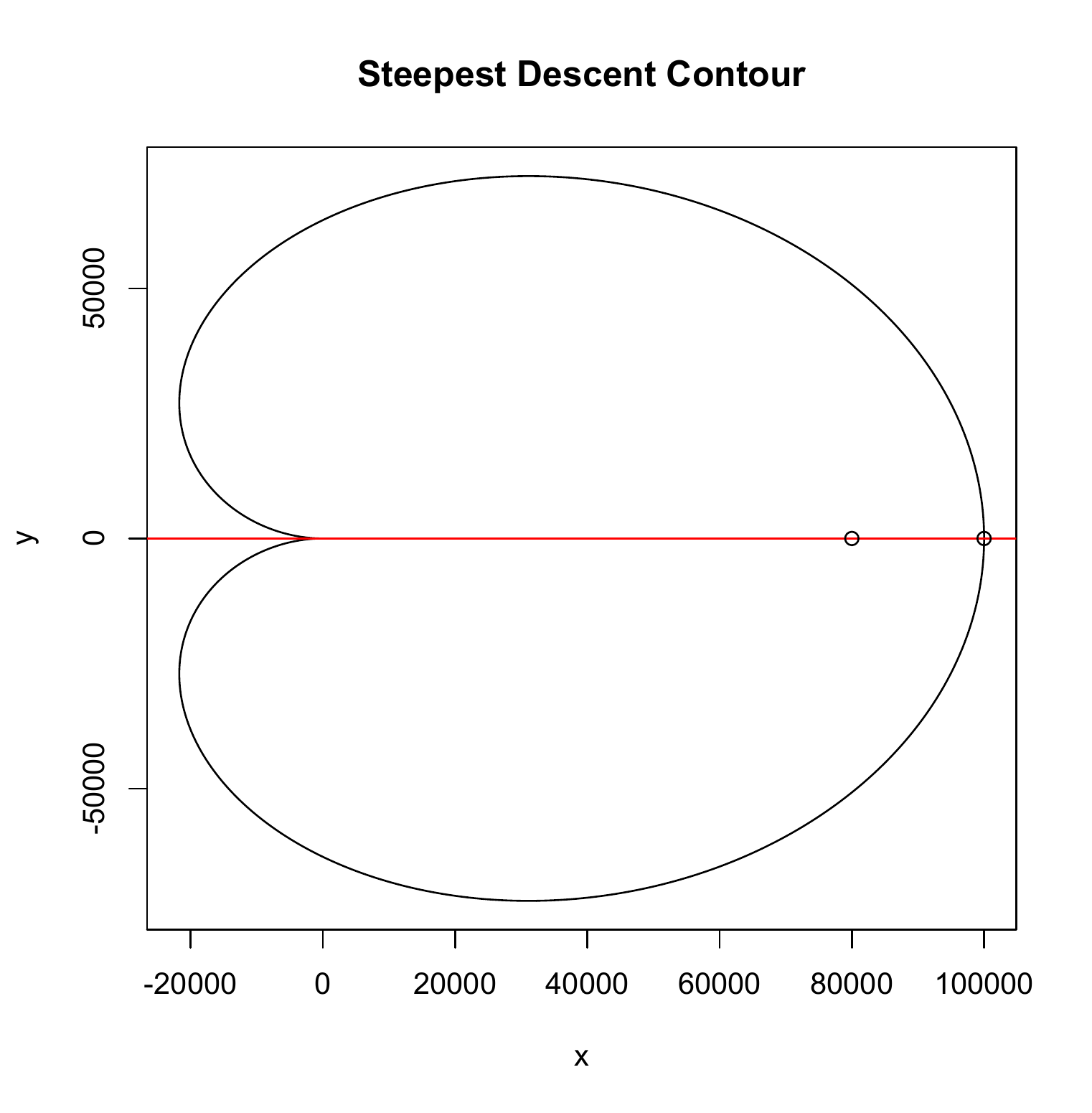} 
  \caption{Contour Deformation}\label{dc}
\end{figure}

In the process of deformation, we are going to cross some singularities, whose residues should be taken into account. For $\phi_{n},\phi_{m}^{(n)}$, the crossed singularity is $\xi=\frac{1}{y_{n}^{1/\alpha}}$. The crossed singularities in $\phi_{l,n},\phi_{l,m}^{(n)}$ are not easy to pinpoint. But one can conclude that the crossed singularities are the zeros of the following equation
$$
1-y_{l,n}\xi^{\alpha-l}(\xi-1)^l=0.
$$
After contour deformation, the complex integrals for $\phi_{n},\tilde{\phi}_{l,n},\phi_{m}^{(n)},\phi_{l,m}^{n}$ can be rewritten as summation of two parts: residues and real integral. The dominant part is the residue. Thus, we obtain the asymptotic beahvior of $\phi_{n},\tilde{\phi}_{l,n}$ as $n\to\infty$, and asymptotic beahvior of $\phi_{m}^{(n)},\phi_{l,m}^{n}$ as $m\to\infty$.
\begin{thm}\label{asymptotic}
The four quantities $\phi_{n},\phi_{l,n},\phi_{m}^{(n)},\phi_{l,m}^{(n)}$ have the following asymptotic behaviour
\begin{itemize}
\item[1] As $n\to\infty$,$\phi_{n}\sim \frac{1}{\alpha(1-y_{n}^{1/\alpha})^n}$, $\phi_{l,n}\sim \frac{1}{\alpha(1-\xi_{n})^{n+l}}$.
\item[2] As $m\to\infty$, 
\begin{align*}
\phi_{m}^{(n)}&\sim \frac{1}{(j-1)!\alpha^j(1-(y_{m}^{(n)})^{1/\alpha})^{m+j}}\frac{1}{[m(y_{m}^{(n)})^{1/\alpha}]^{n-1}}\\
\phi_{l,m}^{(n)}&\sim \frac{1}{(j-1)!\alpha^j(m/\xi_{m})^{n-j}} \frac{1}{(1-1/\xi_{m})^{m+j}}
\end{align*}
\end{itemize}
where $\xi_{n}>1$ is a solution of $1-y_{l,n}\xi^{\alpha-l}(\xi-1)^l=0.$
\end{thm}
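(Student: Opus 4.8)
The proof rests on the integral representations of Theorem~\ref{integral}, each of which is, up to an explicit prefactor, of the form $\frac{1}{2\pi\bi}\int_{C}F(\xi)\,d\xi$ with $C$ a small circle around $\xi=1$. For every one of $\phi_{n},\tilde\phi_{l,n},\phi_{m}^{(n)},\phi_{l,m}^{(n)}$ the plan is the same: deform $C$ away from $\xi=1$, collect the residues at the poles crossed, and show that the integral over the deformed contour is of strictly smaller order than the leading residue. Taking $\frac{1}{\beta^{1/(1-\alpha)}}\log$ of the resulting asymptotics then (in a later step) produces $\psi,\psi_{l},\psi^{(n)},\psi_{l}^{(n)}$.

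\textbf{Locating and classifying the singularities.} Fix the branch cut of $\xi^{\alpha}$ along $(-\infty,0]$; then each $F$ is meromorphic off the cut, with a pole at $\xi=1$ of order $n$, $n{+}1$, $m{+}1$, $m{+}1$ respectively, and with further poles exactly at the zeros of the bracketed denominator factor: $\xi=y_{n}^{-1/\alpha}$, simple, for $\phi_{n}$; for $\tilde\phi_{l,n}$ the unique real root $\xi_{n}>1$ of $1-y_{l,n}\xi^{\alpha-l}(\xi-1)^{l}=0$ (simple, uniqueness on $(1,\infty)$ because $\frac{d}{d\xi}\log(\xi^{\alpha-l}(\xi-1)^{l})=\frac{\alpha(\xi-1)+l}{\xi(\xi-1)}>0$ there), together with possibly further zeros clustering near $\xi=0$ that contribute negligibly; $\xi=y_{m}^{-1/\alpha}$, order $j$, for $\phi_{m}^{(n)}$; and the corresponding real root $\xi_{m}>1$, order $j$, for $\phi_{l,m}^{(n)}$. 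The moderate-deviation scaling forces $\beta_{n}^{\alpha/(1-\alpha)}/n^{\alpha}\to0$, hence $y_{n}\to0$ and the dominant extra pole drifts off to $+\infty$ along the positive real axis; in particular it lies outside $C$ and is genuinely crossed once $n$ (resp.\ $m$) is large.

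\textbf{Deformation and residue extraction.} For $\phi_{n},\tilde\phi_{l,n}$ the integrand is $O(|\xi|^{-1-\alpha})$ at $\infty$: push $C$ out to a circle of radius $R\to\infty$, leaving $-\operatorname{Res}$ at the crossed pole plus $\frac{1}{2\pi\bi}$ times a Hankel loop $H$ around $(-\infty,0]$. For $\phi_{m}^{(n)},\phi_{l,m}^{(n)}$ the integrand grows at $\infty$, so deform $C$ onto the steepest-descent contour $\mathcal S=\{\arg(\xi^{n+m-1}/(\xi-1)^{m+1})=0\}$ through the saddle $\xi_{\ast}=\frac{n+m-1}{n-2}$ of $\xi^{n+m-1}/(\xi-1)^{m+1}$ --- on $\mathcal S$ that modulus is maximal at $\xi_{\ast}$ --- again leaving $-\operatorname{Res}$ at the crossed pole plus the remainder over $\mathcal S$, up to the prefactors $(1-y_{m})^{j}/\binom{n+m-1}{m}$ and $m!/(n)_{(m)}$. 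At a simple pole one divides by the derivative of $1-y_{n}\xi^{\alpha}$, which equals $-\alpha/\xi$ on its zero set once the defining equation is used, giving $-\operatorname{Res}_{\xi=y_{n}^{-1/\alpha}}=\alpha^{-1}(1-y_{n}^{1/\alpha})^{-n}$ and the analogue for $\tilde\phi_{l,n}$. At an order-$j$ pole one applies $\operatorname{Res}=\frac{1}{(j-1)!}\frac{d^{j-1}}{d\xi^{j-1}}\big[(\xi-\xi_{0})^{j}F(\xi)\big]\big|_{\xi_{0}}$; since $(\xi-\xi_{0})^{j}(1-y_{m}\xi^{\alpha})^{-j}$ is analytic and nonzero at $\xi_{0}$, and differentiating $\xi^{n+m-1}/(\xi-1)^{m+1}$ produces a factor $\sim -m/\xi_{0}^{2}$ which beats (since $\xi_{0}=o(m)$) the $O(1/\xi_{0})$ from differentiating the analytic factor, the leading contribution is the term with all $j-1$ derivatives on $\xi^{n+m-1}/(\xi-1)^{m+1}$; assembling this with the prefactors --- via $\binom{n+m-1}{m}\sim m^{n-1}/(n-1)!$ and the leading sizes of $\xi_{n},\xi_{m}$ --- yields the stated closed forms.

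\textbf{The remainder estimate --- the main obstacle.} It remains to show each leftover integral is $o$ of the residue. For $H$, use $|\xi-1|=|\xi|+1$ on $(-\infty,0)$ to bound $\int_{H}$ by a polynomial in $n$, which is negligible against the residue term, of order $\exp\{c\,\beta_{n}^{1/(1-\alpha)}\}$ with $\beta_{n}\to\infty$. The genuinely delicate case is the steepest-descent remainder: the saddle-point bound gives $|\int_{\mathcal S}|=O(m^{-1/2})\,|\xi_{\ast}^{\,n+m-1}/(\xi_{\ast}-1)^{m+1}|$ times the analytic factor at $\xi_{\ast}$, so one must verify that the crossed pole $\xi_{0}\in\{y_{m}^{-1/\alpha},\xi_{m}\}$ sits on the falling branch of $\xi\mapsto|\xi^{n+m-1}/(\xi-1)^{m+1}|$ strictly between $1$ and $\xi_{\ast}$ --- true because $\xi_{0}=o(m)$ while $\xi_{\ast}\sim m/(n-2)$ --- and, crucially, that the gap $\log|\xi_{0}^{\,n+m-1}/(\xi_{0}-1)^{m+1}|-\log|\xi_{\ast}^{\,n+m-1}/(\xi_{\ast}-1)^{m+1}|$ grows like $c\,\beta_{m}^{1/(1-\alpha)}\to\infty$, so that even after the polynomially bounded factor $(1-y_{m}\xi^{\alpha})^{-j}$ is included the residue still dominates. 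This comparison --- a convexity estimate for $\log|\xi^{n+m-1}/(\xi-1)^{m+1}|$ on $(1,\infty)$ combined with the precise order of $\xi_{0}$ relative to $\xi_{\ast}$ --- is the technical heart of the argument; granted it, the four asymptotics follow at once by pairing the residue formulas of the previous step with the negligibility of the remainders.
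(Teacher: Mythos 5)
Your overall strategy coincides with the paper's: deform the small circle to a Hankel loop for $\phi_{n},\tilde\phi_{l,n}$ and to the constant-phase (steepest descent) contour through $\xi_{*}=\frac{n+m-1}{n-2}$ for $\phi_{m}^{(n)},\phi_{l,m}^{(n)}$, extract the crossed residues, and show the leftover integral is of smaller order. Your residue computations are sound (the $-\alpha/\xi_{0}$ derivative at a simple zero, and the observation that in the order-$j$ case the $k=0$ Leibniz term dominates because each derivative on $\xi^{n+m-1}/(\xi-1)^{m+1}$ costs $m/\xi_{0}^{2}$ versus $1/\xi_{0}$ on the analytic factor, with $m/\xi_{0}\to\infty$), and your identification of the decisive gap $\log\bigl|\xi_{0}^{\,n+m-1}/(\xi_{0}-1)^{m+1}\bigr|-\log\bigl|\xi_{*}^{\,n+m-1}/(\xi_{*}-1)^{m+1}\bigr|\asymp\beta_{m}^{1/(1-\alpha)}$ is exactly the mechanism the paper exploits. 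Nevertheless there are two genuine gaps.

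First, for $\tilde\phi_{l,n}$ and $\phi_{l,m}^{(n)}$ you must know \emph{all} zeros of $1-y_{l,n}\xi^{\alpha-l}(\xi-1)^{l}$ in the cut plane before you can say which poles the deformation crosses. Your monotonicity argument only controls real roots in $(1,\infty)$, and your parenthetical "possibly further zeros clustering near $\xi=0$ that contribute negligibly" is not justified: since $|\xi^{\alpha-l}(\xi-1)^{l}|\sim|\xi|^{\alpha}\to\infty$ as $|\xi|\to\infty$, the equation $\xi^{\alpha-l}(\xi-1)^{l}=1/y_{l,n}$ could a priori have complex solutions of large modulus off the real axis, and such a root would be crossed by either contour and would contribute a residue of the same exponential order as the one at $\xi_{n}$. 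Ruling this out is the content of the paper's Lemma 4.1, whose proof (an argument-counting analysis of $\alpha\arg\eta=l\arg(1-\eta)+2k\pi$ after the inversion $\eta=1/\xi$) occupies the appendix; your proposal is incomplete without some substitute for it.

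Second, the steepest-descent remainder bound, which you yourself call the technical heart, is stated but not carried out ("granted it, the four asymptotics follow"). Beyond the modulus comparison you describe, one must also bound $|1-y_{m}\xi^{\alpha}|^{-j}$ from below \emph{along the contour}: near the point of $\mathcal S$ where $y_{m}R^{\alpha}(\theta)\approx1$ this factor could blow up, and calling it "polynomially bounded" presupposes the estimate. The paper handles this by splitting the contour at a $\theta_{m}$ with $\pi/p-\theta_{m}\sim\beta_{m}^{-2/(1-\alpha)}$, using $|1-y_{m}\xi^{\alpha}|\geq|\sin\alpha\theta|$ on one piece and $|1-y_{m}\xi^{\alpha}|\geq y_{m}R^{\alpha}(\theta_{m})-1\to\infty$ on the other; some such argument must be supplied. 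With these two points filled in, your proof would match the paper's.
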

Then we can have our limiting log-Laplace transformations of $K_{n},M_{l,n},K_{m}^{(n)},M_{l,m}^{(n)}$, thereby their MDPs.
\begin{thm}\label{mdp}
\item[1]
The limiting log-Laplace transformation of $K_{n},M_{l,n},K_{m}^{(n)},M_{l,m}^{(n)}$ is
$$
\psi_{\lambda}=\psi^{(n)}(\lambda)=\begin{cases}
\lambda^{1/\alpha} & \lambda>0\\
0&\lambda\leq0
\end{cases}
$$
and
$$
\psi_{l}(\lambda)=\psi_{l}^{(n)}(\lambda)=\begin{cases}
(\frac{\alpha(1-\alpha)_{(l-1)}}{l!}\lambda)^{1/\alpha} & \lambda>0\\
0&\lambda\leq0
\end{cases}
$$
\item[2] $K_{n},M_{l,n},K_{m}^{(n)},M_{l,m}^{(n)}$ have $\mathrm{MDP}$s with speed $\beta_{m}^{\frac{1}{1-\alpha}}$ and rate functions $I(x),I_{l}(x)$,\\
$I_{l}(x), I^{(n)}_{l}(x)$, where
    $$
    I(x)=I^{(n)}(x)=\begin{cases}
    (1-\alpha)\alpha^{\alpha/(1-\alpha)}x^{1/(1-\alpha)} & x\leq 0\\
    +\infty& x>0.
    \end{cases}
    $$
    and
    $$
    I_{l}(x)=I^{(n)}_{l}(x)=\begin{cases}
    (1-\alpha)\left(\frac{l!}{(1-\alpha)_{(l-1)}}\right)^{\alpha/(1-\alpha)}x^{1/(1-\alpha)} & x\leq 0\\
    +\infty & x>0.
    \end{cases}
    $$
\end{thm}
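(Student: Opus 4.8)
The plan is to read Part~1 off Theorem~\ref{asymptotic} by a logarithmic asymptotic expansion, and then to obtain Part~2 by feeding the four limiting log-Laplace transforms into the G\"artner--Ellis theorem; by the reduction recalled at the start of this section (following \cite{MR3850070}) it is enough to work with the $\theta=0$ formulas of Theorems~\ref{integral}--\ref{asymptotic}. We stay throughout in the genuinely moderate regime $1\ll\beta_n\ll n^{1-\alpha}$ (and likewise $\beta_m$), i.e.\ $\beta_n\to\infty$ and $\beta_n^{1/(1-\alpha)}=o(n)$. The second condition forces $y_n=1-\exp\{-\lambda\beta_n^{\alpha/(1-\alpha)}/n^{\alpha}\}\to0$, with the expansion $y_n=\lambda\beta_n^{\alpha/(1-\alpha)}/n^{\alpha}+O(\beta_n^{2\alpha/(1-\alpha)}/n^{2\alpha})$; the first condition supplies the cheap but decisive fact $\log\beta_n=o(\beta_n^{1/(1-\alpha)})$, which is exactly what makes the speed restriction $\beta_n/\log^{1-\alpha}n\to\infty$ of \cite{MR3850070} unnecessary.

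Take $\psi(\lambda)$ first. For $\lambda>0$ we have $0<y_n<1$, and Theorem~\ref{asymptotic} gives $\log\phi_n=-\log\alpha-n\log(1-y_n^{1/\alpha})=ny_n^{1/\alpha}+O(ny_n^{2/\alpha})-\log\alpha$; substituting the expansion of $y_n$, the leading term equals $\lambda^{1/\alpha}\beta_n^{1/(1-\alpha)}(1+o(1))$ while the remainder is $O(\beta_n^{2/(1-\alpha)}/n)=o(\beta_n^{1/(1-\alpha)})$ — this is the place where $\beta_n^{1/(1-\alpha)}=o(n)$ is used — so dividing by $\beta_n^{1/(1-\alpha)}$ yields $\psi(\lambda)=\lambda^{1/\alpha}$. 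For $\lambda\le0$ we have $y_n\le0$, so $1-y_n\xi^{\alpha}$ has no zero in the cut $\xi$-plane, no residue is collected when $C$ is deformed onto Hankel's contour, and $\phi_n$ reduces to that Hankel integral, which a crude modulus bound shows is $n^{O(1)}$; hence $\psi(\lambda)=0$, and the two branches meet continuously at $\lambda=0$, where $\phi_n\equiv1$. The computation of $\psi_l$ is the same once the relevant root $\xi_n$ of $1-y_{l,n}\xi^{\alpha-l}(\xi-1)^{l}=0$ in Theorem~\ref{asymptotic} has been located: since $y_{l,n}=\frac{\alpha(1-\alpha)_{(l-1)}}{l!}\frac{y_n}{1-y_n}\sim\frac{\alpha(1-\alpha)_{(l-1)}}{l!}y_n\to0$, asymptotic inversion gives $\xi_n\sim y_{l,n}^{-1/\alpha}\to\infty$, so the residue term dominating $\log\tilde\phi_{l,n}$ is $\sim n/\xi_n\sim(\frac{\alpha(1-\alpha)_{(l-1)}}{l!}\lambda)^{1/\alpha}\beta_n^{1/(1-\alpha)}$, giving $\psi_l(\lambda)$ as claimed for $\lambda>0$ and $0$ otherwise.

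For the conditional transforms $\psi^{(n)},\psi_l^{(n)}$ one repeats this with $m\to\infty$ and $n,j$ held fixed, using Theorem~\ref{asymptotic}~(2); the only new point is the mechanism that removes the speed restriction. The a priori $m$-polynomial prefactors there — $[m(y_m^{(n)})^{1/\alpha}]^{n-1}$ and $(m/\xi_m)^{n-j}$ — are in fact powers of $m(y_m^{(n)})^{1/\alpha}\sim\lambda^{1/\alpha}\beta_m^{1/(1-\alpha)}$ and of $m/\xi_m\sim(\frac{\alpha(1-\alpha)_{(l-1)}}{l!}\lambda)^{1/\alpha}\beta_m^{1/(1-\alpha)}$ respectively (the powers of $m$ cancel between the combinatorial normalisation and the residue), so their logarithms are $O(\log\beta_m)=o(\beta_m^{1/(1-\alpha)})$ for \emph{any} $\beta_m\to\infty$, while $(1-y_m)^{j}$, $m!/(n)_{(m)}$, $(j-1)!\,\alpha^{j}$ and the binomial normalisations contribute only $O(\log\beta_m)$ or $O(1)$. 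The dominant residue contribution is again $\sim m/\xi_m\sim(\text{root})^{-1}m$, so $\psi^{(n)}(\lambda)=\psi(\lambda)$ and $\psi_l^{(n)}(\lambda)=\psi_l(\lambda)$ — in particular independent of $n$ and of $j$ — which finishes Part~1.

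For Part~2, each of $\psi,\psi_l,\psi^{(n)},\psi_l^{(n)}$ is finite on all of $\mathbb{R}$, convex, continuous and differentiable everywhere (at $0$ both one-sided derivatives vanish because $1/\alpha-1>0$), so the G\"artner--Ellis theorem applies with no steepness caveat and yields, for each of $K_n,M_{l,n},K_m^{(n)},M_{l,m}^{(n)}$, an MDP at speed $\beta_n^{1/(1-\alpha)}$ (resp.\ $\beta_m^{1/(1-\alpha)}$) with rate function the Legendre--Fenchel transform of the corresponding transform; since the transforms coincide in pairs so do the rate functions, i.e.\ $I=I^{(n)}$ and $I_l=I^{(n)}_l$. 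Computing $\psi^{*}(x)=\sup_\lambda(\lambda x-\psi(\lambda))$: on the half-line where it is infinite the supremum over the relevant sign of $\lambda$ already diverges, and on the other half-line it is attained at $\lambda_\star=(\alpha x)^{\alpha/(1-\alpha)}$ with value $(1-\alpha)\alpha^{\alpha/(1-\alpha)}x^{1/(1-\alpha)}$; the same one-variable optimisation with $\psi_l$ in place of $\psi$ produces $I_l$, establishing Part~2. Given Theorems~\ref{integral} and~\ref{asymptotic}, the main obstacle will be (i) making the asymptotics of the transcendental roots $\xi_n,\xi_m$ of $1-y_{l,n}\xi^{\alpha-l}(\xi-1)^{l}=0$ sharp and uniform enough that the $o(1)$ corrections still vanish after division by the speed, and (ii) the $\lambda\le0$ branch, where one must check cleanly that the deformation crosses no singularity and that the leftover Hankel (resp.\ steepest-descent) integral is $\exp\{o(\beta_n^{1/(1-\alpha)})\}$, so that $\psi\equiv0$ there and glues to the $\lambda>0$ branch at the origin.
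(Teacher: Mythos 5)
Your proposal follows the paper's own route exactly: the asymptotics of Theorem \ref{asymptotic} are logged, the polynomial prefactors are discarded as $O(\log\beta_m)=o(\beta_m^{1/(1-\alpha)})$, and G\"artner--Ellis is invoked. The paper's proof consists precisely of the four relations $\log\phi_n\sim ny_n^{1/\alpha}\sim\lambda^{1/\alpha}\beta_n^{1/(1-\alpha)}$, $\log\phi_{l,n}\sim n/\xi_n$, etc., followed by the citation of G\"artner--Ellis, so your write-up is the same argument with the bookkeeping made explicit.

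Two points need repair. First, for $\lambda\le0$ your bound ``the Hankel integral is $n^{O(1)}$'' is insufficient in precisely the regime the paper is trying to reach: once the restriction $\beta_n\gg\log^{1-\alpha}n$ is dropped, $\beta_n^{1/(1-\alpha)}=o(\log n)$ is allowed, and then $\log\phi_n=O(\log n)$ does not yield $\log\phi_n=o(\beta_n^{1/(1-\alpha)})$. The clean argument is elementary: $K_n\ge0$ gives $\phi_n\le1$, while Jensen gives $\log\phi_n\ge\lambda\beta_n^{\alpha/(1-\alpha)}\e K_n/n^{\alpha}=O(\beta_n^{\alpha/(1-\alpha)})=o(\beta_n^{1/(1-\alpha)})$; this also repairs the paper's own one-line justification (``the log-Laplace transform is a finite number''), which by itself proves nothing. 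Second, in the Legendre transform you place the maximiser at $\lambda_\star=(\alpha x)^{\alpha/(1-\alpha)}>0$, which forces $x>0$; hence the finite branch of $I$ is $\{x\ge0\}$ and $I=+\infty$ on $\{x<0\}$ (as it must be, since $K_n/(n^{\alpha}\beta_n)\ge0$), which is the opposite orientation to the one printed in the theorem. You should state this explicitly rather than hiding it behind ``the relevant sign of $\lambda$'': your (correct) computation in fact corrects the statement as written.
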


\section{Proof of Theorem \ref{integral}}
\subsection{Integral Representation of $\phi_{n}$}
\begin{proof} 
Since
$$
\binom{i\alpha+n-1}{n-1}=\frac{(i\alpha)(i\alpha+1)\cdots(i\alpha+n-1)}{(n-1)!}=\frac{1}{2\pi \bi}\int_{C}\frac{\xi^{i\alpha+n-1}}{(\xi-1)^n}d\xi.
$$
then 
$$
\phi_{n}=\sum_{i=0}^{\infty}\frac{1}{2\pi \bi}\int_{C}(y_{n}\xi^\alpha)^{i}\frac{\xi^{n-1}d\xi}{(\xi-1)^n},
$$
where the contour $C$ is chosen to be a circle with radius $r=|\xi-1|<\frac{1}{2}$. So when $n$ is large enough, $|y_{n}\xi^\alpha|<1$ for $|\xi-1|<\frac{1}{2}$. Then the power series is uniformly convergent in domain $D=\{\xi\in\mathbb{C}\mid|\xi-1|<\frac{1}{2}\}$. After switching of summation and integration, one can have 
$$
\phi_{n}=\frac{1}{2\pi \bi}\int_{C}\frac{\xi^{n-1}d\xi}{(\xi-1)^n}\sum_{i=0}^{\infty}(y_{n}\xi^\alpha)^{i}=\frac{1}{2\pi \bi}\int_{C}\frac{\xi^{n-1}d\xi}{(\xi-1)^n(1-y_{n}\xi^\alpha)}.
$$
\end{proof}
\subsection{Integral Representation of $\phi_{l,n}$}
\begin{proof}
We know
$$
\frac{(n-il+i\alpha)!}{(i\alpha)!(n-il)!}=\frac{(i\alpha+1)\cdots(n-il+i\alpha)}{(n-il)!}
=\frac{1}{2\pi \bi}\int_{C}\frac{\xi^{n-il+i\alpha}}{(\xi-1)^{n-il+1}}d\xi.
$$
where the contour $C$ is a circle contained in domain $D$,
then
\begin{align*}
\tilde{\phi}_{l,n}(\lambda)=&\sum_{i=0}^{[n/l]}y_{l,n}^i\frac{1}{2\pi \bi}\int_{C}\frac{\xi^{n-il+i\alpha}}{(\xi-1)^{n-il+1}}d\xi\\
=&\frac{1}{2\pi \bi}\int_{C}\frac{\xi^{n}}{(\xi-1)^{n+1}}\sum_{i=0}^{[n/l]}[y_{l,n}\xi^{\alpha-l}(\xi-1)^{l}]^i d\xi\\
=&\frac{1}{2\pi \bi}\int_{C}\frac{\xi^{n}}{(\xi-1)^{n+1}}\frac{1-[y_{l,n}\xi^{\alpha-l}(\xi-1)^{l}]^{[n/l]+1}}
{1-y_{l,n}\xi^{\alpha-l}(\xi-1)^l}d\xi\\
=&\frac{1}{2\pi \bi}\int_{C}\frac{\xi^{n}}{(\xi-1)^{n+1}}\frac{1}
{1-y_{l,n}\xi^{\alpha-l}(\xi-1)^l}d\xi\\
&-\frac{1}{2\pi \bi}\int_{C}\frac{\xi^{n}}{(\xi-1)^{n+1}}\frac{[y_{l,n}\xi^{\alpha-l}(\xi-1)^{l}]^{[n/l]+1}}
{1-y_{l,n}\xi^{\alpha-l}(\xi-1)^l}d\xi
\end{align*}
In fact, 
\begin{equation}
\frac{1}{2\pi \bi}\int_{C}\frac{\xi^{n}}{(\xi-1)^{n+1}}\frac{[x\xi^{\alpha-l}(\xi-1)^{l}]^{[n/l]+1}}
{1-x\xi^{\alpha-l}(\xi-1)^l}d\xi=0.\label{claim1}
\end{equation}
Because $n=kl+r$ where $r<l$ and
$$
\frac{\xi^{n}}{(\xi-1)^{n+1}}\frac{[x\xi^{\alpha-l}(\xi-1)^{l}]^{[n/l]+1}}
{1-x\xi^{\alpha-l}(\xi-1)^l}=\frac{\xi^{kl+r+(\alpha-l)(k+1)}}{(\xi-1)^{r+1-l}}\frac{1}{1-x\xi^{\alpha-1}(\xi-1)^l}
$$
We know $\frac{1}{(\xi-1)^{r+1-l}}$ is analytic in $D$ for $r+1-l\leq0$. Therefore, the above function is analytic in $D$.  The claim \ref{claim1} is true. Thus, we have 
$$
\tilde{\phi}_{l,n}=\frac{1}{2\pi \bi}\int_{C}\frac{\xi^{n}d\xi}{(\xi-1)^{n+1}[1-y_{l,n}\xi^{\alpha-l}(\xi-1)^l]}.
$$
\end{proof}
\subsection{Integral Representation of $\phi_{m}^{(n)}$}
\begin{proof}
Since
\begin{align*}
\frac{j_{(i)}}{i!}\frac{\binom{n+i\alpha+m-1}{n+m-1}}{\binom{n+i\alpha-1}{n-1}}
=\frac{1}{\binom{n+m-1}{m}}\frac{(n+i\alpha)\cdots(n+i\alpha+m-1)}{m!}\frac{j(j+1)\cdots(j+i-1)}{i!}
\end{align*}
and
\begin{align*}
\frac{(n+i\alpha)\cdots(n+i\alpha+m-1)}{m!}=&\frac{1}{2\pi \bi}\int_{C_{1}}\frac{\xi_{1}^{n+i\alpha+m-1}}{(\xi_{1}-1)^{m+1}}d\xi_{1}\\
\frac{j(j+1)\cdots(j+i-1)}{i!}=&\frac{1}{2\pi \bi}\int_{C_{2}}\frac{\xi_{2}^{j+i-1}}{(\xi_{2}-1)^{i+1}}d\xi_{2}
\end{align*}
then
\begin{align*}
\frac{j_{(i)}}{i!}\frac{\binom{n+i\alpha+m-1}{n+m-1}}{\binom{n+i\alpha-1}{n-1}}
=\frac{1}{\binom{n+m-1}{m}}\frac{1}{(2\pi \bi)^2}\int_{C_{1}}\int_{C_{2}}\frac{\xi_{1}^{n+i\alpha+m-1}\xi_{2}^{j+i-1}}{(\xi_{1}-1)^{m+1}(\xi_{2}-1)^{i+1}}d\xi_{1}d\xi_{2},
\end{align*}
where $C_{1}$ is a circle centered at $\xi_{1}=1$ with radius $r_{1}\in(\frac{1}{4},\frac{1}{2})$, and $C_{2}$ is a circle centered at $\xi_{2}=1$ with radius $r_{2}\in(\frac{3y_{m}}{2-3y_{m}},\frac{1}{2})$. Then 
\begin{align*}
\phi_{m}^{(n)}=&(1-y_{m}^{(n)})^{j}\sum_{i=0}^{\infty}[y_{m}^{(n)}]^i\frac{1}{\binom{n+m-1}{m}}\frac{1}{(2\pi \bi)^2}\int_{C_{1}}\int_{C_{2}}\frac{\xi_{1}^{n+i\alpha+m-1}\xi_{2}^{j+i-1}}{(\xi_{1}-1)^{m+1}(\xi_{2}-1)^{i+1}}d\xi_{1}d\xi_{2}\\
=&\frac{(1-y_{m}^{(n)})^{j}}{\binom{n+m-1}{m}}\frac{1}{(2\pi \bi)^2}\sum_{i=0}^{\infty}\int_{C_{1}}\int_{C_{2}}\frac{\xi_{1}^{n+m-1}\xi_{2}^{j-1}}{(\xi_{1}-1)^{m+1}(\xi_{2}-1)}(\frac{y_{m}^{(n)}\xi_{1}^{\alpha}\xi_{2}}{\xi_{2}-1})^i d\xi_{1}d\xi_{2}
\end{align*}
One can easily show that 
\begin{equation}
|\frac{y_{m}\xi_{1}^\alpha\xi_{2}}{\xi_{2}-1}|<1.\label{cc}
\end{equation}

Therefore, the power series 
$
\sum_{i=0}^{\infty}(\frac{y_{m}\xi_{1}^{\alpha}\xi_{2}}{\xi_{2}-1})^i
$
is uniformly convergent. We can switch the summation and integration, then
\begin{align*}
\phi_{m}^{(n)}
=&\frac{(1-y_{m})^{j}}{\binom{n+m-1}{m}}\frac{1}{2\pi \bi}\int_{C_{1}}\frac{\xi_{1}^{n+m-1}d\xi_{1}}{(1-y_{m}\xi_{1}^\alpha)(\xi_{1}-1)^{m+1}}\frac{1}{2\pi \bi}\int_{C_{2}}\frac{\xi_{2}^{j-1}d\xi_{2}}{\xi_{2}-\frac{1}{1-y_{m}\xi_{1}^\alpha}}
\end{align*}
By (\ref{cc}) one can check that $\frac{1}{1-y_{m}\xi_{1}^\alpha}$ is the only singularity of the integrand $\frac{\xi_{2}^{j-1}}{\xi_{2}-\frac{1}{1-y_{m}\xi_{1}^\alpha}}$ inside $C_{2}$. Therefore,
$$
\phi_{m}^{(n)}=\frac{(1-y_{m})^{j}}{\binom{n+m-1}{m}}\frac{1}{2\pi \bi}\int_{C_{1}}\frac{\xi_{1}^{n+m-1}d\xi_{1}}{(\xi_{1}-1)^{m+1}(1-y_{m}\xi_{1}^\alpha)^j}.
$$
\end{proof}

\subsection{Integral Representation of $\phi_{l,m}^{(n)}$}
\begin{proof}
\begin{align*}
&\binom{j+i-1}{i}\binom{n+m-il+i\alpha-1}{n+i\alpha-1}\\
=&\frac{(n+i\alpha)\cdots(n+i\alpha+m-il-1)}{(m-il)!}\frac{j(j+1)\cdots(j+i-1)}{i!}\\
=&\frac{1}{2\pi \bi }\int_{C_{1}}\frac{\xi_{1}^{n+i\alpha+m-il-1}}{(\xi_{1}-1)^{m-il+1}}d\xi_{1}\frac{1}{2\pi \bi}\int_{C_{2}}\frac{\xi_{2}^{j+i-1}}{(\xi_{2}-1)^{i+1}}d\xi_{2}\\
=&\frac{1}{(2\pi \bi)^2}\int_{C_{1}}\int_{C_{2}}\frac{\xi_{2}^{j-1}\xi_{1}^{n+m-1}}{(\xi_{2}-1)(\xi_{1}-1)^{m+1}}\left(\frac{\xi_{2}\xi_{1}^{\alpha-l}(\xi_{1}-1)^{l}}{(\xi_{2}-1)}\right)^i d\xi_{1}d\xi_{2},
\end{align*}
where $C_{1},C_{2}$ has chosen to be circles centered at $1$ with radius satisfying
$$
r_{1}=|\xi_{1}-1|\in(\frac{1}{4},\frac{1}{2}), \quad r_{2}=|\xi_{2}-1|\in(\frac{y_{l,m}}{2^\alpha-y_{l,m}},\frac{1}{2})
$$
Then
\begin{align*}
\phi_{l,m}^{(n)}(\lambda)
=&\frac{m!}{(n)_{(m)}}\frac{1}{(2\pi \bi)^2}\int_{C_{1}}\int_{C_{2}}\frac{\xi_{2}^{j-1}\xi_{1}^{n+m-1}}{(\xi_{2}-1)(\xi_{1}-1)^{m+1}}\sum_{i=0}^{[m/l]}\left(y_{l,m}\frac{\xi_{2}\xi_{1}^{\alpha-l}(\xi_{1}-1)^{l}}{(\xi_{2}-1)}\right)^id\xi_{1}d\xi_{2}\\
=&\frac{m!}{(n)_{(m)}}\frac{1}{(2\pi \bi)^2}\int_{C_{1}}\int_{C_{2}}\frac{\xi_{2}^{j-1}\xi_{1}^{n+m-1}}{(\xi_{2}-1)(\xi_{1}-1)^{m+1}}\frac{1}{1-y_{l,m}\frac{\xi_{2}\xi_{1}^{\alpha-l}(\xi_{1}-1)^{l}}{(\xi_{2}-1)}}
d\xi_{1}d\xi_{2}\\
-&\frac{m!}{(n)_{(m)}}\frac{1}{(2\pi \bi)^2}\int_{C_{1}}\int_{C_{2}}\frac{\xi_{2}^{j-1}\xi_{1}^{n+m-1}}{(\xi_{2}-1)(\xi_{1}-1)^{m+1}}\frac{\left[y_{l,m}\frac{\xi_{2}\xi_{1}^{\alpha-l}(\xi_{1}-1)^{l}}{(\xi_{2}-1)}\right]^{[m/l]+1}}{1-y_{l,m}\frac{\xi_{2}\xi_{1}^{\alpha-l}(\xi_{1}-1)^{l}}{(\xi_{2}-1)}}
d\xi_{1}d\xi_{2}
\end{align*}
Similarly, one can show that the second term is $0$. Therefore,
\begin{align*}
\phi_{l,m}^{(n)}=\frac{m!}{(n)_{(m)}}\frac{1}{2\pi \bi}\int_{C_{1}}\frac{\xi_{1}^{n+m-1}d\xi_{1}}{(\xi_{1}-1)^{m+1}(1-y_{l,m}\xi_{1}^{\alpha-l}(\xi_{1}-1)^l)}\frac{1}{2\pi \bi}\int_{C_{2}}\frac{\xi_{2}^{j-1}d\xi_{2}}{\xi_{2}-\frac{1}{1-y_{l,m}\xi_{1}^{\alpha-l}(\xi_{1}-1)^l}}.
\end{align*}
Similarly, one can also show that
$
\xi_{1}=\frac{1}{1-y_{l,m}\xi_{2}^{\alpha-l}(\xi_{2}-1)^l}
$
is the only singularity in $C_{2}$. Then one has
$$
\phi_{l,m}^{(n)}=\frac{m!}{(n)_{(m)}}\frac{1}{2\pi \bi}\int_{C_{1}}\frac{\xi_{2}^{n+m-1}d\xi_{2}}{(\xi_{2}-1)^{m+1}[1-y_{l,m}\xi_{2}^{\alpha-l}(\xi_{2}-1)^l)]^j}.
$$
\end{proof}

\section{Proof of Theorem \ref{asymptotic}}
In the following proofs, we only need to consider the case where $\lambda>0$. Because when $\lambda\leq0$, the log-Laplace transform is a finite number, the limiting log-Laplace transform will be $0$ when $\lambda\leq0$.
\subsection{Asymptotic behavior of $\phi_{n}$}
\begin{proof}
We want to expand $C$ to Hankel contour in Figure \ref{dc} denoted as $C_{middle}$ such that it encloses singular point $\xi=\frac{1}{y_{n}^{1/\alpha}}$. Define
$
f_{n}(\xi)=\frac{\xi^{n-1}}{(\xi-1)^{n}(1-y_{n}\xi^\alpha
)},
$
then
\begin{align*}
\phi_{n}(\lambda)=&\frac{1}{2\pi \bi}\int_{C_{middle}}f_{n}(\xi)d\xi-res_{\xi=\frac{1}{y_{n}^{1/\alpha}}}f_{n}\\
=&\frac{1}{2\pi \bi}\int_{C_{middle}}f_{n}(\xi)d\xi-\lim_{\xi\to\frac{1}{y_{n}^{1/\alpha}}}\frac{\xi^{n-1}(\xi-\frac{1}{y_{n}^{1/\alpha}})}{(\xi-1)^n(1-y_{n}\xi^\alpha)}\\
=&\frac{1}{2\pi \bi}\int_{C_{middle}}f_{n}(\xi)d\xi+\frac{1}{\alpha(1-y_{n}^{1/\alpha})^n}
\end{align*}
For the integral
$$
\frac{1}{2\pi \bi}\int_{C_{middle}}f_{n}(\xi)d\xi,
$$
the contour $C_{middle}$ has four parts: $C_{1},C_{2},C_{3},C_{4}$, where
\begin{itemize}
\item[1] $C_{1}$ is a big circle with the parameter equation
$$
z=Re^{\bi\theta}, -\theta_{0}<\theta<\theta_{0},
$$
where $\theta_{0}=\pi-\arcsin(\frac{r}{R})$ and $0<r<R$.
\item[2] $C_{2}$ is a small circle with the parameter equation
$$
z=re^{\bi\theta}, -\frac{\pi}{2}<\theta<\frac{\pi}{2}
$$
\item[3] $C_{3}$ is a horizontal line on the upper half plane with parameter equation
$$
z=x+\bi r, 0<x<-R.
$$
\item[4] $C_{4}$ is a horizontal line on the lower half plane, the parameter equation is
$$
z=x-\bi r, 0<x<-R
$$
\end{itemize}
The four parts are connected positively according to counterclockwise direction. Now we are going to deform the $C_{middle}$ to $C_{final}$ by letting $R\to\infty$ and $r\to0$. One can show that
$$
\lim_{R\to\infty}\frac{1}{2\pi \bi}\int_{C_{1}}f_{n}(\xi)d\xi=0, \quad \lim_{r\to0}\frac{1}{2\pi \bi}\int_{C_{2}}f_{n}(\xi)d\xi=0
$$
Indeed,$$
|\frac{1}{2\pi \bi}\int_{C_{1}}f_{n}(\xi)d\xi|\leq \frac{R^{n-1}}{(R-1)^n(y_{n}R^\alpha-1)}2\pi R\to0 \mbox{ as } R\to\infty.
$$
and
$$
|\frac{1}{2\pi \bi}\int_{C_{2}}f_{n}(\xi)d\xi|\leq \frac{r^{n-1}}{(1-r)^n(1-y_{n}r^\alpha)}2\pi r\to0 \mbox{ as } r\to0.
$$
In this process, $C_{3}$ becomes a straight line on the upper bank of negative x-axis from $-\infty$ to $0$, and $C_{4}$ becomes a straight line on the lower bank of the negative x-axis from $0$ to $-\infty$. So
$$
\frac{1}{2\pi \bi}\int_{C_{middle}}f_{n}(\xi)d\xi=\frac{1}{2\pi \bi}\int_{-\infty}^{0}f_{n}^{+}(x)dx+\frac{1}{2\pi \bi}\int^{-\infty}_{0}f_{n}^{-}(x)dx
$$
where
$$
f_{n}^{+}(x)=\lim_{r\to0^{+}}f_{n}(x+\bi r)=\lim_{r\to0^{+}}\frac{(x+\bi r)^{n-1}}{(x+ir-1)^n}\frac{1}{1-y_{n}\exp\{\alpha \log |x+\bi r|+\bi\alpha\arg(x+\bi r)\}}.
$$
and
$$
f_{n}(x)=\lim_{r\to0^{+}}f_{n}(x-\bi r)=\lim_{r\to0^{+}}\frac{(x-\bi r)^{n-1}}{(x-\bi r-1)^n}\frac{1}{1-y_{n}\exp\{\alpha \log|x-\bi r|+\bi\alpha\arg(x-\bi r)\}}.
$$
Because
$$
\lim_{r\to0^{+}}\arg(x+\bi r)=\pi,\lim_{r\to0^{+}}\arg(x-\bi r)=-\pi
$$
Then
$$
f_{n}^{+}(x)=\lim_{r\to0^{+}}f_{n}(x+\bi r)=\frac{x^{n-1}}{(x-1)^n}\frac{1}{1-y_{n}|x|^\alpha e^{\bi\alpha\pi}}.
$$
and
$$
f_{n}^{-}(x)=\lim_{r\to0^{+}}f_{n}(x+\bi r)=\frac{x^{n-1}}{(x-1)^n}\frac{1}{1-y_{n}|x|^\alpha e^{-\bi\alpha\pi}}.
$$
Therefore,
\begin{align*}
&\frac{1}{2\pi \bi}\int_{C_{middle}}f_{n}(\xi)d\xi\\
=&\frac{1}{2\pi \bi}\int_{-\infty}^{0}\frac{x^{n-1}}{(x-1)^n}\frac{1}{1-y_{n}|x|^\alpha e^{\bi\alpha\pi}}dx+\frac{1}{2\pi \bi}\int^{-\infty}_{0}\frac{x^{n-1}}{(x-1)^n}\frac{1}{1-y_{n}|x|^\alpha e^{-\bi\alpha\pi}}dx\\
=&\frac{1}{2\pi \bi}\int_{-\infty}^{0}\frac{x^{n-1}}{(x-1)^n}\left[\frac{1}{1-y_{n}|x|^\alpha e^{\bi\alpha\pi}}-\frac{1}{1-y_{n}|x|^\alpha e^{-i\alpha\pi}}\right]dx\\
=&\frac{1}{\pi}\int_{-\infty}^{0}\frac{x^{n-1}}{(x-1)^n} \mathrm{Im}(\frac{1}{1-y_{n}|x|^\alpha e^{\bi\alpha\pi}})dx\\
=&\frac{1}{\pi}\int_{-\infty}^{0}\frac{x^{n-1}}{(x-1)^n}\frac{y_{n}|x|^\alpha\sin\alpha\pi}{y_{n}^2|x|^{2\alpha}-2y_{n}|x|\cos\alpha\pi+1}dx
\end{align*}
Using substitution $u=-\frac{1}{y_{n}^{1/\alpha}x}$, we end up with
$$
\frac{1}{2\pi \bi}\int_{C_{middle}}f_{n}(\xi)d\xi=-\frac{\sin\alpha\pi}{\pi}\int_{0}^{\infty}\frac{du}{(1+y_{n}^{1/\alpha}u)^n u(\frac{1}{u^\alpha}+u^\alpha-2\cos\alpha\pi)}
$$
Note that the function $\frac{1}{u(\frac{1}{u^\alpha}+u^\alpha-2\cos\alpha\pi)}$ is integrable in interval $(0,\infty)$. and $\frac{1}{(1+y_{n}^{1/\alpha}u)^n}=e^{-n\log(1+uy_{n}^{1/\alpha})}\sim e^{-uny_{n}^{1/\alpha}}\to0$. Then by Lebesgue dominant convergent theorem, one can show that $\lim_{n\to\infty}\frac{1}{2\pi \bi}\int_{C_{middle}}f_{n}(\xi)d\xi=0$. But 
$$
\frac{1}{(1-y_{n}^{1/\alpha})^n}=e^{-n\log(1-y_{n}^{1/\alpha})}\sim e^{ny_{n}^{1/\alpha}}\to\infty. 
$$
Then $\phi_{n}\sim \frac{1}{(1-y_{n}^{1/\alpha})^n}$.
\end{proof}

\subsection{Asymptotic behavior of $\phi_{l,n}$}

Before we present the proof, we need the following lemma.
\begin{lemma}\label{ss}
There is a positive integer $N_{0}$, such that, for $n\geq N_{0}$, the equation
$$
1-y_{l,n}\xi^{\alpha-l}(\xi-1)^l=0
$$
 has at most two real solutions and no other solutions.
\begin{itemize}
\item[(a)]When $l$ is even, there are two solutions $\xi_{1,n}<1$ and $\xi_{2,n}>1$. As $n\to\infty$
    $$
    \xi_{1,n}\sim y_{l,n}^{1/(l-\alpha)}\to0\mbox{ and } \xi_{2,n}\sim\frac{1}{y_{l,n}^{1/\alpha}}\to\infty.
    $$
\item[(b)] When $l$ is odd, the only solution is some real number $\xi_{n}>1$. As $n\to\infty$
    $$
 \xi_{n}\sim\frac{1}{y_{l,n}^{1/\alpha}}\to\infty.
    $$
\end{itemize}
\end{lemma}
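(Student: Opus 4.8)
The plan is to handle the real solutions by elementary calculus and the (non‑)existence of complex solutions by a multi‑scale Rouché/Hurwitz argument. Write $g(\xi)=\xi^{\alpha-l}(\xi-1)^{l}$ with $\xi^{\alpha-l}$ the principal branch (analytic on the slit plane $\Omega=\mathbb{C}\setminus(-\infty,0]$), $h_{n}(\xi)=1-y_{l,n}g(\xi)$, and $\rho_{n}=y_{l,n}^{-1}$. Since the moderate deviation scaling requires $\beta_{n}=o(n^{1-\alpha})$, we have $\beta_{n}^{\alpha/(1-\alpha)}/n^{\alpha}\to0$, hence $y_{n}\to0$ and $y_{l,n}\to0$, i.e.\ $\rho_{n}\to+\infty$; put $a_{n}=y_{l,n}^{1/(l-\alpha)}\to0$, $b_{n}=y_{l,n}^{-1/\alpha}\to+\infty$, and note the identities $y_{l,n}a_{n}^{\alpha-l}=1=y_{l,n}b_{n}^{\alpha}$. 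The equation of the lemma is $g(\xi)=\rho_{n}$, so a solution must in particular have $|g(\xi)|=\rho_{n}$. On $(0,\infty)\setminus\{1\}$,
\[
\frac{d}{d\xi}\log|g(\xi)|=\frac{\alpha-l}{\xi}+\frac{l}{\xi-1}=\frac{\alpha\xi+l-\alpha}{\xi(\xi-1)},
\]
whose numerator is positive for all $\xi>0$ (as $l\ge1>\alpha$). Thus $|g|$ strictly decreases on $(0,1)$ from $+\infty$ to $0$ and strictly increases on $(1,\infty)$ from $0$ to $+\infty$. Since $g>0$ on $(1,\infty)$ there is a unique root $\xi_{2,n}>1$, and $g(\xi)\sim\xi^{\alpha}$ at $\infty$ gives $\xi_{2,n}\sim b_{n}$. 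On $(0,1)$, $g(\xi)=(-1)^{l}\xi^{\alpha-l}(1-\xi)^{l}$: for $l$ even there is a unique root $\xi_{1,n}\in(0,1)$ with $\xi_{1,n}\sim a_{n}$ (from $g(\xi)\sim\xi^{\alpha-l}$ near $0$), while for $l$ odd $g<0$ there and there is no root. Monotonicity shows these are all the real solutions, which is the content of (a) and (b).

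\emph{Ruling out other zeros.} For $\xi\in\Omega$ the inequality $|\xi-1|\le|\xi|+1$ gives $|y_{l,n}g(\xi)|\le\psi(|\xi|)$ with $\psi(t)=y_{l,n}t^{\alpha-l}(t+1)^{l}$, and $\psi$ is $+\infty$ at $0^{+}$, strictly decreasing to its minimum at $t^{\ast}=(l-\alpha)/\alpha$, then strictly increasing to $+\infty$; by the identities above $\psi(t)=1$ has exactly two roots $t_{-}<t^{\ast}<t_{+}$ with $t_{-}=(1+o(1))a_{n}$, $t_{+}=(1+o(1))b_{n}$, and $\psi<1$ on $(t_{-},t_{+})$. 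Hence $h_{n}$ has no zero in $\{t_{-}<|\xi|<t_{+}\}$. For $|\xi|\le\delta a_{n}$ (a fixed small $\delta$) one has $|y_{l,n}g(\xi)|\ge(1+o(1))(a_{n}/|\xi|)^{l-\alpha}\ge(1+o(1))\delta^{-(l-\alpha)}>1$, so $|h_{n}|>0$ there too. On the shell $\{\delta a_{n}\le|\xi|\le t_{-}\}$ put $\xi=a_{n}v$: then $h_{n}(\xi)=0$ reads $v^{\alpha-l}(a_{n}v-1)^{l}=1$, converging uniformly on compacts of $\{|v|\ge\delta\}\cap\Omega$ to $(-1)^{l}v^{\alpha-l}=1$. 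On $\{|\xi|\ge t_{+}\}$ put $\xi=b_{n}w$: then $h_{n}(\xi)=0$ reads $w^{\alpha-l}(w-b_{n}^{-1})^{l}=1$, converging uniformly on compacts of $\{|w|\ge\delta\}\cap\Omega$ to $w^{\alpha}=1$, whose only zero in $\Omega$ is the simple zero $w=1$; since moreover $|w^{\alpha-l}(w-b_{n}^{-1})^{l}|\ge(1+o(1))|w|^{\alpha}>1$ for $|w|$ large, Hurwitz's theorem gives exactly one zero of $h_{n}$ with $|\xi|\ge t_{+}$, necessarily $\xi_{2,n}$.

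\emph{Main obstacle.} The remaining region is the inner shell $\{\delta a_{n}\le|\xi|\le t_{-}\}$, and this is the delicate point: the zeros of the limiting equation $(-1)^{l}v^{\alpha-l}=1$ in $\Omega$ are precisely the $v$ with $|v|=1$ and $(l-\alpha)\arg v\in\pi l+2\pi\mathbb{Z}$ (with $\arg v\in(-\pi,\pi)$), and extracting them calls for careful bookkeeping of the branch of $\xi^{\alpha-l}$ across the cut — for $l=1$ there is no such $v$, so $h_{n}$ has no zero in that shell, while for $l$ even the relevant root is $v=1$, which recovers $\xi_{1,n}/a_{n}$, and in these cases Hurwitz closes the argument so that the count in the lemma is exact. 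The principal technical burden, in my view, is neither the limiting computations nor the elementary real‑variable analysis above, but making the modulus bound and the two rescalings valid uniformly and simultaneously across $|\xi|\lesssim a_{n}$, $a_{n}\ll|\xi|\ll b_{n}$, and $|\xi|\gtrsim b_{n}$, so that \emph{every} zero of $h_{n}$ is accounted for, together with the branch analysis of $(-1)^{l}v^{\alpha-l}=1$ just described.
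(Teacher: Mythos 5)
Your real\hyphenation{}-variable analysis is correct and coincides in substance with the paper's: the paper differentiates $h(x)=x^{\alpha-l}(x-1)^{l}$ and reads off the same sign pattern from the factor $\alpha x+l-\alpha$, so parts (a) and (b) and the asymptotics of $\xi_{1,n},\xi_{2,n},\xi_{n}$ are fine. Your treatment of the non-real zeros is genuinely different from the paper's (the paper substitutes $\eta=1/\xi$ and tracks the argument identity $\alpha\arg\eta_{0,n}=l\arg(1-\eta_{0,n})+2k_{n}\pi$ along subsequences, whereas you rescale at the two natural scales $a_{n}$ and $b_{n}$ and invoke Hurwitz), and your middle-annulus modulus bound and outer-shell argument are sound. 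But the proposal is not a proof: the inner shell $\{\delta a_{n}\le|\xi|\le t_{-}\}$, which you yourself flag as the main obstacle, is exactly where the content of the lemma lives, and you leave it open.

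Worse, carrying out the inner-shell step you set up does not close the gap --- it shows the ``no other solutions'' claim fails for $l\ge 3$. Your own characterization of the limit equation is correct: $(-1)^{l}v^{\alpha-l}=1$ forces $|v|=1$ and $(l-\alpha)\arg v\in\pi l+2\pi\mathbb{Z}$ with $\arg v\in(-\pi,\pi)$. Enumerating these gives $l-1$ roots, not one: for $l=3$ the two non-real roots $v=e^{\pm i\pi/(3-\alpha)}$ (and no real one), for $l=4$ the roots $v=1$ and $v=e^{\pm 2\pi i/(4-\alpha)}$, and so on; your parenthetical ``for $l$ even the relevant root is $v=1$'' already fails at $l=4$, and odd $l\ge3$ is not addressed at all. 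Each such root is a simple zero of the limit function, so Hurwitz produces a genuine non-real zero of $h_{n}$ at $\xi\approx a_{n}v$ inside the slit plane. A direct check confirms this: for $\alpha=\tfrac12$, $l=3$, $\xi_{0}=y_{l,n}^{2/5}e^{2\pi i/5}$ gives $\xi_{0}^{-5/2}=-y_{l,n}^{-1}$ and $(\xi_{0}-1)^{3}=-1+O(\xi_{0})$, hence $y_{l,n}\xi_{0}^{\alpha-l}(\xi_{0}-1)^{l}=1+o(1)$. So the statement you are asked to prove is false as written for $l\ge3$, and no completion of your scheme (nor, for that matter, the paper's subsequence argument, whose decisive step is the claim that $(\alpha-l)\theta/\pi$ is never an integer --- it equals $-1$ at $\theta=\pi/(l-\alpha)$) can establish it; at best one can argue that the extra zeros have modulus $O(a_{n})$ and contribute residues of order $O(a_{n}^{n})$, which is what would be needed to salvage Theorem \ref{asymptotic}.
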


\begin{proof}
The deformation process is rather similar. First, we want to expand the contour to Hankel contour $C_{middle}$ in Figure \ref{dc} such that the new contour encloses singular points which are the solutions of
 $
 1-y_{l,n}\xi^{\alpha-l}(\xi-1)^l=0.
 $
 Define
 $$
 f_{l,n}(\xi)=\frac{\xi^{n-1}}{(\xi-1)^{n}[1-x_{l,n}\xi^{\alpha-l}(\xi-1)^l]}.
 $$
  By Lemma \ref{ss}, we know $C_{middle}$ enclose two singularities $\xi_{1,n}$ and $\xi_{2,n}$ when $l$ is even; while  $C_{middle}$ enclose  only one singularity $\xi_{n}$ for odd $l$. Similarly, 
 
 $$\begin{cases}
 \tilde{\phi}_{l,n}
 =\frac{1}{2\pi \bi}\int_{C_{middle}}f_{l,n}(\xi)d\xi- res_{\xi=\xi_{1,n}}f_{l,n}- res_{\xi=\xi_{2,n}}f_{l,n}&\mbox{ for even } l\\
 \tilde{\phi}_{l,n}
 =\frac{1}{2\pi \bi}\int_{C_{middle}}f_{l,n}(\xi)d\xi- res_{\xi=\xi_{n}}f_{l,n}&\mbox{ for odd } l 
 \end{cases}
 $$
 By Lemma \ref{ss}, one can check that $res_{\xi=\xi_{1,n}}f_{l,n}\to0$ as $n\to\infty$, but $res_{\xi=\xi_{2,n}}f_{l,n}\sim res_{\xi=\xi_{n}}f_{l,n}\to\infty$. One can also similarly check that $\frac{1}{2\pi \bi}\int_{C_{middle}}f_{l,n}(\xi)d\xi\to0$, then 
 $$
 \phi_{l,n}\sim res_{\xi=\xi_{n}}f_{l,n} \sim \frac{\xi_{n}^{n+l-\alpha+1}}{x_{l,n}(\xi_{n}-1)^{n+l}}\frac{1}{\alpha \xi_{n}+l-\alpha}. 
 $$
 \end{proof}
 
\subsection{Evaluation of $\phi_{m}^{(n)}$}
Before we present the proof, we also need a lemma.
\begin{lemma}\label{residue}
Let $F(\xi)=\sum_{\nu=0}^{\infty}\frac{A_{v+j}}{(v+j)!}\xi^v$ and $G(\xi)=\frac{1}{F(\xi)}$. Then
$$
G^{(k)}(0)=\left[\sum_{s=1}^{k}(-1)^s\sum_{\eta\in\mathcal{P}_{s}(k)}\frac{(j!)^sk!}{(j+\eta_{1})!\cdots(j+\eta_{s})!}\mathcal{A}_{s}^{j}(\eta)\right]\frac{j!}{A_{j}}
$$
where $\mathcal{P}_{s}(k)$ is the set of $s$-component compositions of $k$, and
$$
\mathcal{A}_{s}^{j}(\eta)=\frac{A_{j+\eta_{1}}}{A_{j}}\cdots \frac{A_{j+\eta_{s}}}{A_{j}}.
$$
\end{lemma}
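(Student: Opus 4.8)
The plan is to read off $G^{(k)}(0)=k!\,[\xi^k]G(\xi)$ and to compute the Taylor coefficients of the reciprocal power series $G=1/F$ by the classical geometric-series expansion, and then translate the generic coefficients of $F$ into the $A$'s. Write $c_\nu:=A_{\nu+j}/(\nu+j)!$ for the coefficients of $F$, so $F(\xi)=\sum_{\nu\geq0}c_\nu\xi^\nu$ with $c_0=F(0)=A_j/j!$, which is nonzero (this is implicit in the statement, since $j!/A_j$ appears). The first step is to factor $F(\xi)=c_0\bigl(1+H(\xi)\bigr)$, where $H(\xi)=\sum_{i\geq1}(c_i/c_0)\xi^i$ satisfies $H(0)=0$; hence $H$ is analytic near the origin and $|H(\xi)|<1$ there, so that
\[
G(\xi)=\frac{1}{c_0}\sum_{s=0}^{\infty}(-1)^s H(\xi)^s
\]
converges for $\xi$ in a neighbourhood of $0$.

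Next I would extract the coefficient of $\xi^k$. Expanding the $s$-th power,
\[
H(\xi)^s=\sum_{\eta_1,\dots,\eta_s\geq1}\frac{c_{\eta_1}}{c_0}\cdots\frac{c_{\eta_s}}{c_0}\,\xi^{\eta_1+\cdots+\eta_s},
\]
so $[\xi^k]H(\xi)^s=\sum_{\eta\in\mathcal{P}_s(k)}\prod_{i=1}^s(c_{\eta_i}/c_0)$; since every part $\eta_i\geq1$, this vanishes for $s>k$, and for $k\geq1$ the term $s=0$ does not contribute. Therefore
\[
[\xi^k]G(\xi)=\frac{1}{c_0}\sum_{s=1}^{k}(-1)^s\sum_{\eta\in\mathcal{P}_s(k)}\prod_{i=1}^s\frac{c_{\eta_i}}{c_0}.
\]
Now substitute $c_\nu=A_{\nu+j}/(\nu+j)!$: one gets $1/c_0=j!/A_j$ and, for each part,
\[
\frac{c_{\eta_i}}{c_0}=\frac{A_{j+\eta_i}/(j+\eta_i)!}{A_j/j!}=\frac{j!}{(j+\eta_i)!}\cdot\frac{A_{j+\eta_i}}{A_j},
\]
hence $\prod_{i=1}^s(c_{\eta_i}/c_0)=\dfrac{(j!)^s}{(j+\eta_1)!\cdots(j+\eta_s)!}\,\mathcal{A}_s^j(\eta)$. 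Multiplying $[\xi^k]G(\xi)$ by $k!$ yields precisely the claimed formula.

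I do not expect a genuine obstacle here; the only point requiring care is the justification of the rearrangement of the nested sums, which is immediate once one notes that $F(0)=A_j/j!\neq0$ (so the geometric expansion is legitimate) and that each inner sum over $\mathcal{P}_s(k)$ is finite. Alternatively, the identity can be established purely formally by induction on $k$ from the convolution recurrence $\sum_{i=0}^{k}c_i\,d_{k-i}=\delta_{k,0}$ for the coefficients $d_\nu:=G^{(\nu)}(0)/\nu!$, which dispenses with any analytic input altogether; I would mention this as a remark but carry out the geometric-series argument as the main line of proof.
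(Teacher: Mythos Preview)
Your proof is correct. You take a different route from the paper: the paper proceeds by induction on $k$, starting from the Leibniz identity applied to $F(\xi)G(\xi)=1$, which gives the recursion
\[
G^{(k)}(0)=-\sum_{p=1}^{k}\frac{k!\,j!}{(p+j)!}\,\frac{A_{p+j}}{A_j}\,\frac{1}{(k-p)!}\,G^{(k-p)}(0),
\]
and then verifies that the claimed closed form satisfies this recursion. Your argument instead reads off the Taylor coefficients of $G$ directly via the geometric-series expansion $G=c_0^{-1}\sum_{s\ge0}(-1)^sH^s$, which yields the composition sum in one stroke without induction. Both arguments are standard; yours is shorter and more transparent, while the paper's inductive approach is precisely the ``convolution recurrence'' you mention as an alternative at the end of your proposal. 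Either is fine here.
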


\begin{cor} \label{cresidue}
Let $f(\xi)=\sum_{i=1}^{\infty}\frac{a_{i}}{i!}(\xi-\xi_{m})^i$ has a simple zero $\xi_{m}$, $|\xi_{m}|\to\infty$ and $a_{i}=f^{(i)}(\xi_{m})\sim b_{i}\xi_{m}^{i}$ as $m\to\infty$.  Then 
$$
\left[\left(\frac{\xi-\xi_{m}}{f(\xi)}\right)^j\right]^{(k)}\Bigg|_{\xi=\xi_{m}}\sim D\xi_{m}^{k-j},
$$
where $D=k!\sum_{s=1}^k(-1)^s\sum_{\eta\in\mathcal{P}_{s}(k)}\mathcal{D}_{s}^j(\eta)$, and 
$
\mathcal{D}_{s}^j(\eta)=\frac{j!}{b_{j}}\prod_{u=1}^{s}\frac{b_{j+\eta_{u}}j!}{b_{j}(j+\eta_{u})!}.
$

\end{cor}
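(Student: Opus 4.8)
The plan is to read Corollary~\ref{cresidue} off Lemma~\ref{residue}: the work splits into a change of variable identifying the target with $G^{(k)}(0)$, the asymptotics of the Taylor coefficients of $f^{j}$ at $\xi_{m}$, and a homogeneity count in $\xi_{m}$. First, put $\eta=\xi-\xi_{m}$. Since $f$ has a simple zero at $\xi_{m}$, $f(\xi_{m}+\eta)=\sum_{i\ge 1}\frac{a_{i}}{i!}\eta^{i}$ with $a_{1}\ne 0$, so $\Phi(\eta):=f(\xi_{m}+\eta)^{j}$ vanishes to order exactly $j$. Writing $\Phi(\eta)=\sum_{v\ge 0}\frac{A_{v+j}}{(v+j)!}\eta^{v+j}$, i.e. $A_{n}:=\Phi^{(n)}(0)$ with $A_{j}=j!\,a_{1}^{j}\ne 0$, one gets
$$
\left(\frac{\xi-\xi_{m}}{f(\xi)}\right)^{\!j}=\frac{\eta^{j}}{\Phi(\eta)}=\frac{1}{F(\eta)}=G(\eta),\qquad F(\eta)=\sum_{v\ge 0}\frac{A_{v+j}}{(v+j)!}\eta^{v},
$$
so that $\big[(\tfrac{\xi-\xi_{m}}{f(\xi)})^{j}\big]^{(k)}\big|_{\xi=\xi_{m}}=G^{(k)}(0)$, and Lemma~\ref{residue} expresses the latter as an explicit alternating sum over compositions of $k$ in the ratios $A_{j+\eta_{u}}/A_{j}$.

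Next I would pin down the asymptotics of $A_{n}$ for $j\le n\le j+k$ (the only indices that occur). Expanding the $j$-th power, $A_{n}=n!\sum\prod_{t=1}^{j}\frac{a_{i_{t}}}{i_{t}!}$, the sum running over the finitely many compositions $i_{1}+\cdots+i_{j}=n$ into positive parts; substituting $a_{i}\sim b_{i}\xi_{m}^{i}$ in each term and using $i_{1}+\cdots+i_{j}=n$ throughout gives $A_{n}\sim \tilde b_{n}\,\xi_{m}^{\,n}$, where $\tilde b_{n}:=n!\sum\prod_{t=1}^{j}\frac{b_{i_{t}}}{i_{t}!}$ (so in particular $A_{j}\sim j!\,b_{1}^{j}\,\xi_{m}^{j}$). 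These $\tilde b_{n}$ are the constants written $b_{n}$ in the statement; they also guarantee $A_{j}\ne 0$, hence the applicability of Lemma~\ref{residue}, for all large $m$, and we are in the nondegenerate case in which they do not vanish.

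Finally I would substitute $A_{n}\sim\tilde b_{n}\xi_{m}^{n}$ into the formula of Lemma~\ref{residue}. The summand attached to $s$ and $\eta\in\mathcal{P}_{s}(k)$ carries the factor $\big(\prod_{u=1}^{s}\frac{A_{j+\eta_{u}}}{A_{j}}\big)\frac{j!}{A_{j}}\sim\big(\prod_{u=1}^{s}\frac{\tilde b_{j+\eta_{u}}}{\tilde b_{j}}\big)\frac{j!}{\tilde b_{j}}\,\xi_{m}^{\,\sum_{u}\eta_{u}-j}$; since $\eta_{1}+\cdots+\eta_{s}=k$, the power of $\xi_{m}$ is $k-j$ in \emph{every} term. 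Summing the finitely many terms then yields $G^{(k)}(0)\sim D\,\xi_{m}^{k-j}$ with $D=k!\sum_{s=1}^{k}(-1)^{s}\sum_{\eta\in\mathcal{P}_{s}(k)}\mathcal{D}_{s}^{j}(\eta)$ and $\mathcal{D}_{s}^{j}(\eta)=\frac{j!}{\tilde b_{j}}\prod_{u=1}^{s}\frac{\tilde b_{j+\eta_{u}}\,j!}{\tilde b_{j}\,(j+\eta_{u})!}$, which is the assertion.

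The only real obstacle is this last bookkeeping: one must check that \emph{all} terms of the Lemma~\ref{residue} expansion share the single $\xi_{m}$-degree $k-j$, so that asymptotic equivalence is preserved under the finite linear combination (and, pedantically, that the resulting $D$ is nonzero; if it were not, the conclusion would have to be read as the bound $o(\xi_{m}^{k-j})$, which is all that the residue estimates feeding Theorem~\ref{asymptotic} actually require). There is no analytic difficulty, since for fixed $j,k$ the whole argument is a finite algebraic manipulation, uniform in $m$.
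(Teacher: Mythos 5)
Your proposal is correct and follows essentially the same route as the paper, which treats the corollary as immediate from Lemma \ref{residue} and carries out exactly this computation inside the asymptotic analysis of $\phi_{m}^{(n)}$: shift to $\eta=\xi-\xi_{m}$ so the target becomes $G^{(k)}(0)$, derive $A_{n}\sim \tilde b_{n}\xi_{m}^{n}$ from the multinomial expansion of $f^{j}$, and note that every term in the lemma's formula carries the common power $\xi_{m}^{k-j}$ because $\eta_{1}+\cdots+\eta_{s}=k$. Your reading of the constants appearing in $\mathcal{D}_{s}^{j}(\eta)$ as the convolution coefficients of $f^{j}$ (rather than the $b_{i}$ of $f$ itself), and your caveat that $D\neq0$ is needed for a genuine asymptotic equivalence, both match how the paper actually applies the corollary.
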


\begin{proof}
Define
$
f_{m}^{(n)}(\xi)=\frac{\xi^{n+m-1}}{(\xi-1)^{m+1}}\frac{1}{(1-y_{m}^{(n)}\xi^\alpha)^j}.
$
We want to deform the contour $C$ to the steepest descent contour in Figure \ref{dc}. The steepest descent contour $C_{st}$ is define by 
$$
\arg{\frac{\xi^{n+m-1}}{(\xi-1)^{m+1}}}=0.
$$
Let $\xi=Re^{\bi\theta}$, then
$$
(n+m-1)\theta-(m+1)\arctan\left(\frac{R\sin\theta}{R\cos\theta-1}\right)=0
$$
We end up with the following equation
$$
R(\theta)=\frac{\sin(p\theta)}{\sin(p-1)\theta},-\pi/p\leq\theta<\pi/p,\quad p=\frac{n+m-1}{m+1}.
$$
and
$$
\xi(\theta)=R(\theta)e^{\bi\theta}, -\pi/p\leq\theta<\pi/p.
$$
The two intersections with real axis are $\xi(\pi/p)=0$ and $\xi(0)=\frac{p}{p-1}=\frac{n+m-1}{n-2}$. When $m\to\infty$, $\frac{n+m-1}{n-2}\sim\frac{m}{n-2}$, and the singularity of the integrand is
$$
\xi=\frac{1}{(y_{m})^{1/\alpha}}.
$$
We can show that as $m\to\infty$
$$
\frac{\xi(0)}{\frac{1}{(y_{m})^{1/\alpha}}}\sim\frac{1}{n-2}m (y_{m})^{1/\alpha}\to\infty.
$$
Therefore, the singularity $
\xi=\frac{1}{(y_{m})^{1/\alpha}}
$
is inside $C_{st}$.  So
$$
\phi_{m}^{(n)}=\frac{(1-y_{m})^j}{\binom{n+m-1}{m}}\frac{1}{2\pi \bi}\int_{C_{st}}f_{m}^{(n)}(\xi)d\xi-\frac{(1-y_{m})^j}{\binom{n+m-1}{m}}Res_{
\xi=\frac{1}{(y_{m})^{1/\alpha}}}f_{m}^{n}
$$
where
$$
Res_{
\xi=\frac{1}{(y_{m})^{1/\alpha}}}f_{m}^{n}=\frac{1}{(j-1)!}\frac{d^{j-1}}{d\xi^{j-1}}\Bigg|_{\xi=\frac{1}{(y_{m})^{1/\alpha}}}
\left[\frac{\xi^{n+m-1}(\xi-\frac{1}{(y_{m})^{1/\alpha}})^j}{(\xi-1)^{m+1}(1-y_{m}\xi^\alpha)^j}\right]
$$
We claim that 
\begin{align}
&\frac{(1-y_{m})^j}{\binom{n+m-1}{m}}\frac{1}{2\pi \bi}\int_{C_{st}}f_{m}^{(n)}(\xi)d\xi\to0\label{ain}\\
&\frac{(1-y_{m})^j}{\binom{n+m-1}{m}}Res_{
\xi=-\frac{1}{(y_{m})^{1/\alpha}}}f_{m}^{n}
\sim -\frac{1}{(j-1)!\alpha^j(my_{m}^{1/\alpha})^{n-j}}\frac{1}{(1-y_{m}^{1/\alpha})^{m}}\to\infty\label{are}
\end{align}
Therefore, 
$$
\phi_{m}^{(n)}\sim \frac{1}{(j-1)!\alpha^j(1-(y_{m})^{1/\alpha})^{m+j}}\frac{1}{[m(y_{m})^{1/\alpha}]^{n-1}}.
$$
Now we are going to show the claim (\ref{are}) and claim (\ref{ain}).
First, we know that on $C_{st}$
\begin{align*}
\frac{\xi^{n+m-1}}{(\xi-1)^{m+1}}=&\left|\frac{\xi(\theta)^{n+m-1}}{(\xi(\theta)-1)^{m+1}}\right|=\frac{R(\theta)^{n+m-1}}{(\frac{R(\theta)\sin\theta}{R(\theta)\cos\theta-1})^{\frac{m+1}{2}}}\\
=&\exp\left\{(n+m-1)\log R(\theta)-\frac{m+1}{2}\log \frac{R(\theta)\sin\theta}{R(\theta)\cos\theta-1}\right\}
\end{align*}
Denote
$$
h(\theta)=(n+m-1)\log R(\theta)-\frac{m+1}{2}\log (1+R^2(\theta)-2R(\theta)\cos\theta).
$$
because $R(\theta)$ is an even function, then $h(\theta)$ is also an even function and $R^{'}(\theta)$ is an odd function. 

Now the contour integral becomes
\begin{align*}
 & \frac{1}{2\pi \bi}\int_{C_{st}}\frac{\xi^{n+m-1}}{(\xi-1)^{m+1}}\frac{d\xi}{(1-y_{m}\xi_{1}^\alpha)^j}\\
 =&\frac{1}{2\pi\bi}\int_{-\pi/p}^{\pi/p}\exp\{h(\theta)\}\frac{\xi^{'}(\theta)d\theta}{(1-y_{m}\xi(\theta)^\alpha)^j}
\end{align*}
Because $h(\theta)$ is even, and $\xi^{'}(\theta)=(R^{'}(\theta)+\bi R(\theta))e^{\bi\theta}$,then we know
\begin{align*}
&\frac{1}{2\pi\bi}\int_{-\pi/p}^{\pi/p}\exp\{h(\theta)\}\frac{\xi^{'}(\theta)d\theta}{(1-y_{m}\xi(\theta)^\alpha)^j}\\
=&\frac{1}{2\pi\bi}\int_{0}^{\pi/p}\exp\{h(\theta)\}\left[\frac{\xi^{'}(\theta)}{(1-y_{m}\xi(\theta)^\alpha)^j}+\frac{\xi^{'}(-\theta)}{(1-y_{m}\xi(-\theta)^\alpha)^j}\right]d\theta
\end{align*}
Notice that $\xi(-\theta)=R(\theta)\cos\theta-\bi R(\theta)\sin\theta=\overline{\xi(\theta)}$ and because $R^{'}(\theta)$ is odd function, we have
$$
\xi^{'}(-\theta)=(R^{'}(-\theta)+\bi R(-\theta))e^{-\bi\theta}=(-R^{'}(\theta)+\bi R(\theta))e^{-\bi\theta}
$$
Then
\begin{align*}
&\frac{1}{2\pi\bi}\int_{0}^{\pi/p}\exp\{h(\theta)\}\left[\frac{(R^{'}(\theta)+\bi R(\theta))e^{\bi\theta}}{(1-y_{m}\xi(\theta)^\alpha)^j}+\frac{(-R^{'}(\theta)+\bi R(\theta))e^{-\bi\theta}}{(1-y_{m}\xi(-\theta)^\alpha)^j}\right]d\theta\\
=&\frac{1}{\pi}\int_{0}^{\pi/p}\exp\{h(\theta)\}\left\{R^{'}(\theta)\mathrm{Im}\left[\frac{e^{\bi\theta}}{(1-y_{m}\xi(\theta)^\alpha)^j}\right]+R(\theta)\mathrm{Re}\left[\frac{e^{\bi\theta}}{(1-y_{m}\xi(\theta)^\alpha)^j}\right]\right\}d\theta
\end{align*}
So
\begin{align*}
&\Bigg|\frac{1}{2\pi\bi}\int_{0}^{\pi/p}\exp\{h(\theta)\}\left[\frac{(R^{'}(\theta)+\bi R(\theta))e^{\bi\theta}}{(1-y_{m}\xi(\theta)^\alpha)^j}+\frac{(-R^{'}(\theta)+\bi R(\theta))e^{-\bi\theta}}{(1-y_{m}\xi(-\theta)^\alpha)^j}\right]d\theta\Bigg|\\
\leq&\frac{1}{\pi}\int_{0}^{\pi/p}\exp\{h(\theta)\}\frac{|R(\theta)|+|R^{'}(\theta)|}{|1-y_{m}\xi(\theta)^\alpha|^j}d\theta\\
=&\frac{1}{\pi}\int_{0}^{\pi/p}\exp\{h(\theta)\}\frac{|R(\theta)|+|R^{'}(\theta)|}{|\sqrt{y_{m}^2R^{2\alpha}(\theta)-2y_{m}R^{\alpha}(\theta)\cos\theta\alpha+1}|^j}d\theta
\end{align*}
Note that when $0\leq\theta<\pi$ one can have
$$
R(\theta)\sim\frac{\sin\theta}{\theta}\frac{m+1}{n-2},\quad p=\frac{n+m-1}{m+1}\to1 \mbox{ as } m\to\infty
$$
One can also show that
$$
R^{'}(\theta)=\frac{(1-p)\sin\theta+\cos p\theta\sin(p-1)\theta}{\sin^2(p-1)\theta}\sim\frac{\theta\cos\theta-\sin\theta}{\theta^2}<0,\forall\theta\in(0,\pi)
$$
and 
$$
y_{m}\sim \lambda[\beta_{m}/m^{1-\alpha}]^{\alpha/(1-\alpha)},  y_{m}R^\alpha(\theta)\sim \lambda\frac{\sin\theta}{(n-2)\theta}\beta_{m}^{\alpha/(1-\alpha)}\to\infty
$$
Then when $m$ is large enough $R(\theta)$ is strictly decreasing. So there is a unique $0<\theta_{m}<\frac{\pi}{p}$, such that $\frac{\pi}{p}-\theta_{m}\sim\frac{1}{\beta_{m}^{2/(1-\alpha)}}$. Then  
$$
R(\theta_{m})=\frac{\sin(\pi-p\theta_{m})}{[(p-1)\theta_{m}]\sin(p-1)\theta_{m}/[(p-1)\theta_{m}]}\sim\frac{\pi-p\theta_{m}}{(p-1)\theta_{m}}\sim \frac{m(\pi/p-\theta_{m})}{(n-2)\pi}\sim \frac{1}{(n-2)\pi}\frac{m}{\beta_{m}^{2/(1-\alpha)}}
$$
 Then 
$$
\frac{1}{\pi}\int_{0}^{\pi/p}\exp\{h(\theta)\}\frac{|R(\theta)|+|R^{'}(\theta)|}{|\sqrt{y_{m}^2R^{2\alpha}(\theta)-2y_{m}R^{\alpha}(\theta)\cos\theta\alpha+1}|^j}d\theta=\mathrm{I}+\mathrm{II}
$$
where 
\begin{align*}
\mathrm{I}=&\int_{\theta_{m}}^{\pi/p}\exp\{h(\theta)\}\frac{|R(\theta)|+|R^{'}(\theta)|}{|\sqrt{y_{m}^2R^{2\alpha}(\theta)-2y_{m}R^{\alpha}(\theta)\cos\theta\alpha+1}|^j}d\theta\\
\mathrm{II}=&\int_{0}^{\theta_{m}}\exp\{h(\theta)\}\frac{|R(\theta)|+|R^{'}(\theta)|}{|\sqrt{y_{m}^2R^{2\alpha}(\theta)-2y_{m}R^{\alpha}(\theta)\cos\theta\alpha+1}|^j}d\theta
\end{align*}
When $\theta\in[\theta_{m},\pi/p]$, we have $|\sqrt{y_{m}^2R^{2\alpha}(\theta)-2y_{m}R^{\alpha}(\theta)\cos\theta\alpha+1}|\geq\sqrt{\sin^2\theta\alpha}\geq|\sin(\pi\alpha)|$; and when $\theta\in[0,\theta_{m}]$ we have $|\sqrt{y_{m}^2R^{2\alpha}(\theta)-2y_{m}R^{\alpha}(\theta)\cos\theta\alpha+1}|\geq|\sqrt{y_{m}^2R^{2\alpha}(\theta)-2y_{m}R^\alpha(\theta)+1}|\geq (y_{m}R^{\alpha}(\theta_{m})-1)$. Moreover, one can show that there exists constant $M>0$ such that
\begin{equation}
|R^{'}(\theta)|\leq M,\quad |e^{h(\theta)-h(0)}|\leq M \label{bound}
\end{equation}
Indeed, 
\begin{align*}
&h(\theta)-h(0)=(n+m-1)\log\frac{R(\theta)}{R(0)}-\frac{m+1}{2}\log\frac{1+R^2(\theta)-2R(\theta)\cos\theta}{(R(0)-1)^2}\\
=&(n+m-1)\log\frac{R(\theta)}{R(0)}-\frac{m+1}{2}\log(\frac{R(\theta)}{R(0)})^2-\frac{m+1}{2}\log\frac{(1-2\frac{\cos\theta}{R(\theta)}+\frac{1}{R^2(\theta)})}{(1-\frac{1}{R(0)})^2}\\
=&(n-2)\log \frac{R(\theta)}{R(0)}-\frac{m+1}{2}\log(p^2-2\frac{p^2\cos\theta}{R(\theta)}+\frac{p^2}{R^2(\theta)})\\
=&(n-2)\log \frac{R(\theta)}{R(0)}-\frac{m+1}{2}\log(1+2(p-1)+(p-1)^2-2\frac{p^2\cos\theta}{R(\theta)}+\frac{p^2}{R^2(\theta)})\\
\to&(n-2)\log \frac{\sin\theta}{\theta}-\lim_{m\to\infty}\frac{m+1}{2}[2(p-1)+(p-1)^2-2\frac{p^2\cos\theta}{R(\theta)}+\frac{p^2}{R^2(\theta)}]\\
=&(n-2)\log \frac{\sin\theta}{\theta}+(n-2)\frac{\theta\cos\theta}{\sin\theta}.
\end{align*}
so $\exp\{h(\theta)-h(0)\}\leq M$, 
and 
\begin{align*}
\exp\{h(0)\}=&\exp\{(n+m-1)\log R(0)-(m+1)\log(R(0)-1)\}\\
=&(\frac{n+m-1}{n-2})^{n-2}(\frac{n+m-1}{m+1})^{m+1}\\
\sim&  (\frac{e}{n-2})^{n-2}m^{n-2}
\end{align*}
So 
\begin{align*}
\mathrm{I}&\leq \frac{M}{\sin^j\pi\alpha}(R(\theta_{m})+M)(\frac{\pi}{p}-\theta_{m})\sim C\frac{m}{\beta_{m}^{4/(1-\alpha)}}\\
\mathrm{II}&\leq \pi M\frac{R(0)+M}{(y_{m}R^{\alpha}(\theta_{m})-1)^j}\sim C \frac{m}{\beta_{m}^{\frac{2j}{1-\alpha}}}
\end{align*}

Then 
\begin{align*}
&|\frac{(1-y_{m})^j}{\binom{n+m-1}{m}}\frac{1}{2\pi \bi}\int_{C_{st}}f_{m}^{(n)}(\xi)d\xi|\\
\leq&\frac{(1-y_{m})^j}{\binom{n+m-1}{m}}\exp\{h(0)\}\frac{1}{\pi}\int_{0}^{\pi/p}\exp\{h(\theta)-h(0)\}\frac{|R(\theta)|+|R^{'}(\theta)|}{|\sqrt{y_{m}^2R^{2\alpha}(\theta)-2y_{m}R^{\alpha}(\theta)\cos\theta\alpha+1}|^j}d\theta\\
&\sim C\frac{1}{m^{n-1}}(\frac{e}{n-2})^{n-2}m^{n-2}\left[\frac{m}{\beta_{m}^{4\alpha/(1-\alpha)}}+\frac{m}{\beta_{m}^{2\alpha j/(1-\alpha)}}\right]\to0.
\end{align*}
Thus, claim (\ref{ain}) is true. 

To show claim (\ref{are}), we need Lemma \ref{residue}. Using Leibnitz law, we will have
\begin{equation}
\sum_{k=0}^{j-1}\binom{j-1}{k}\left(\frac{\xi^{n+m-1}}{(\xi-1)^{m+1}}\right)^{(j-k-1)}\left[\left(\frac{\xi-1/y_{m}^{1/\alpha}}{1-y_{m}\xi^\alpha}\right)^{j}\right]^{(k)}\label{lk}
\end{equation}
Because $\xi=\frac{1}
{y_{m}^{1/\alpha}}\to\infty$, so by Leibnitz's formula one can also have
\begin{equation}
\left(\frac{\xi^{n+m-1}}{(\xi-1)^{m+1}}\right)^{(j-k-1)}\Bigg|_{\xi=\frac{1}{y_{m}^{1/\alpha}}}\sim (-1)^{j-k-1}(\frac{\xi}{\xi-1})^{m+j-k}\xi^{n-j+k-1}(m/\xi)^{j-k-1}\Bigg|_{\xi=\frac{1}{y_{m}^{1/\alpha}}}.\label{sl}
\end{equation}
 Therefore,
\begin{align*}
\left[\frac{\xi^{m+n-1}}{(\xi-1)^{m+1}}\right]^{(j-k-1)}\Bigg|_{\xi=\frac{1}{y_{m}^{1/\alpha}}}
&\sim(-1)^{j-k-1}\frac{(m (y_{m})^{1/\alpha})^{j-k-1}}{(1-y_{m}^{1/\alpha})^{m+j-k}}\frac{1}{y_{m}^{(n-j+k-1)/\alpha}}\\
&\sim (-1)^{j-k-1}\frac{(m y_{m}^{1/\alpha})^{j-k-1}}{(1-y_{m}^{1/\alpha})^{m}}\frac{1}{(y_{m}^{1/\alpha})^{n-j+k-1}}
\end{align*}
Since
$$
f(\xi)=\frac{1-y_{m}\xi^\alpha}{\xi-1/y_{m}^{1/\alpha}}=\sum_{i=1}^{\infty}\frac{a_{i}}{i!}(\xi-1/y_{m}^{1/\alpha})^{i-1},
$$
where
$$
a_{i}=f^{(i)}(1/y_{m}^{1/\alpha})=-\alpha_{(i)}y_{m}(1/y_{m}^{1/\alpha})^{\alpha-i}\sim(-1)^{i}\alpha (y_{m})^{\frac{i}{\alpha}}(1-\alpha)_{(i-1)}
$$
then we know the coefficients in $F(\xi)=f(\xi)^j=\sum_{v=0}\frac{A_{v+j}}{A_{j}}(\xi-1/y_{m}^{1/\alpha})^{v}$ satisfies the following
\begin{align*}
&A_{v+j}\sim(-1)^{v+j}(y_{m}^{1/\alpha})^{v+j}\alpha^j\sum_{i_{1}+\cdots+i_{j}=v+j}\binom{j+v}{i_{1}\cdots i_{j}}(1-\alpha)_{(i_{1}-1)}\cdots(1-\alpha)_{(i_{j}-1)}
\end{align*}
Then $G(\xi)=\frac{1}{F(\xi)}$, by Lemma \ref{residue} and Corollary \ref{cresidue} one can have
$$
G^{(k)}(1/y_{m}^{1/\alpha})\sim (-1)^{k-j}(y_{m}^{1/\alpha})^{k-j}\frac{1}{\alpha^j}D
$$
where $D=k!\sum_{s=1}^{k}(-1)^s\sum_{\eta\in\mathcal{P}_{s}(k)}\mathcal{D}_{s}^j(\eta)$,
$
\mathcal{D}_{s}^{j}=C_{\eta_{1}}(\alpha)\cdots C_{\eta_{s}}(\alpha)
$
and
$$
C_{p}(\alpha)=\sum_{i_{1}+\cdots+i_{j}=p+j}\frac{(1-\alpha)_{(i_{1}-1)}}{i_{1}!}\cdots\frac{(1-\alpha)_{(i_{j}-1)}}{i_{j}!}.
$$

Therefore, in (\ref{lk}) we know for given $0\leq k\leq j-1$\begin{align*}
&\left(\frac{\xi^{n+m-1}}{(\xi-1)^{m+1}}\right)^{(j-1-k)}\left(\frac{(\xi-1/y_{m}^{1/\alpha})^j}{(1-y_{m}\xi^\alpha)^j}\right)^{(k)}\Bigg|_{\xi=1/y_{m}^{1/\alpha}}
\sim C\frac{(my_{m}^{1/\alpha})^{j-k-1}}{(y_{m}^{1/\alpha})^{n-1}}\frac{1}{(1-y_{m}^{1/\alpha})^{m}}
\end{align*}
Because $my_{m}^{1/\alpha}\to\infty$, then the leading term is the summand where $k=0$, $i.e.$
\begin{align*}
&\left(\frac{\xi^{n+m-1}}{(\xi-1)^{m+1}}\right)^{(j-1)}\left(\frac{\xi-1/y_{m}^{1/\alpha}}{1-y_{m}\xi^\alpha}\right)^{j}\Bigg|_{\xi=1/y_{m}^{1/\alpha}}
\sim-\frac{(my_{m}^{1/\alpha})^{j-1}}{(y_{m}^{1/\alpha})^{n-1}\alpha^j}\frac{1}{(1-y_{m}^{1/\alpha})^{m}}
\end{align*}
Therefore,
\begin{align*}
&\frac{(1-y_{m})^j}{\binom{n+m-1}{m}}Res_{
\xi=\frac{1}{(y_{m})^{1/\alpha}}}f_{m}^{n}
\sim-\frac{1}{(j-1)!\alpha^j(my_{m}^{1/\alpha})^{n-j}}\frac{1}{(1-y_{m}^{1/\alpha})^{m}}.
\end{align*}
\end{proof}

\subsection{Asymptotic of $\phi_{l,m}^{(n)}$}
Define
$
f_{l,m}^{(n)}=\frac{\xi^{n+m-1}}{(\xi-1)^{m+1}}\frac{1}{[1-y_{l,m}\xi^{\alpha-l}(\xi-1)^l]^j}.
$
Similarly, we will deform the contour to steepest descent contour $C_{st}$. Then the integral can be written as summation of two parts: residue part and integral part
$$
\phi_{l,m}^{(n)}=\frac{1}{\binom{m+n-1}{m}}\frac{1}{2\pi\bi}\int_{C_{st}}f_{l,m}^{(n)}(\xi)d\xi-\frac{1}{\binom{m+n-1}{m}}Res_{\xi=\xi_{n}}f_{l,m}^{(n)}.
$$
Similarly, one can show that $\frac{1}{\binom{m+n-1}{m}}\int_{C_{st}}f_{l,m}^{(n)}(\xi)d\xi\to0$. The proof can be carried out quite similarly. Note that 
\begin{align*}
\frac{1}{\binom{m+n-1}{m}}\frac{1}{2\pi\bi}\int_{C_{st}}f_{l,m}^{(n)}(\xi)d\xi=&\frac{1}{\binom{m+n-1}{m}}\frac{1}{2\pi\bi}\int_{-\pi/p}^{\pi/p}\exp\{h(\theta)\}\frac{\xi^{'}(\theta)d\theta}{[1-y_{l,m}\xi^{\alpha-l}(\theta)(\xi(\theta)-1)^l]^j}\\
=&\frac{1}{\binom{m+n-1}{m}}\frac{1}{2\pi\bi}\Bigg[\int_{0}^{\pi/p}e^{h(\theta)}\frac{[R^{'}(\theta)+iR(\theta)]e^{i\theta}d\theta}{[1-y_{l,m}R^{\alpha}(\theta)e^{i\alpha\theta}(1-\frac{1}{R(\theta)}e^{-\bi \theta})^l]^j}\\
&-\int^{0}_{-\pi/p}e^{h(\theta)}\frac{[R^{'}(\theta)+iR(\theta)]e^{i\theta}d\theta}{[1-y_{l,m}R^{\alpha}(\theta)e^{i\alpha\theta}(1-\frac{1}{R(\theta)}e^{-\bi \theta})^l]^j}\Bigg]\\
=&\frac{1}{\binom{m+n-1}{m}}\frac{1}{\pi}\int_{0}^{\pi/p}e^{h(\theta)}\Bigg[R^{'}(\theta)\mathrm{Im}\frac{e^{i\theta}}{[1-y_{l,m}R^{\alpha}(\theta)e^{i\alpha\theta}(1-\frac{1}{R(\theta)}e^{-\bi \theta})^l]^j}\\
&+R(\theta)\mathrm{Re}\frac{e^{i\theta}}{[1-y_{l,m}R^{\alpha}(\theta)e^{i\alpha\theta}(1-\frac{1}{R(\theta)}e^{-\bi \theta})^l]^j}\Bigg]d\theta
\end{align*}
Moreover, 
$$
|\frac{1}{\binom{m+n-1}{m}}\frac{1}{2\pi\bi}\int_{C_{st}}f_{l,m}^{(n)}(\xi)d\xi|\leq \frac{1}{\binom{m+n-1}{m}}\frac{1}{\pi}\int_{0}^{\pi/p}\exp\{h(\theta)\}\frac{(|R(\theta)|+|R^{'}(\theta)|)d\theta}{[|1-y_{l,m}R^{\alpha}(\theta)e^{\bi\alpha\theta}(1-\frac{1}{R(\theta)}e^{-\bi \theta})^l|]^j}
$$
We can rewrite 
$$|1-y_{l,m}R^{\alpha}(\theta)e^{\bi\alpha\theta}(1-\frac{1}{R(\theta)}e^{-\bi \theta})^l|=y_{l,m}^2t^2(\theta)-2y_{l,m}t(\theta)\cos B(\theta)+1$$
where 
$$
t(\theta)=(1-\frac{2\cos\theta}{R(\theta)}+\frac{1}{R^2(\theta)})^{\frac{l}{2}}R^{\alpha}(\theta)
$$
and
$$
B(\theta)=l\arctan\frac{\sin\theta}{R(\theta)-\cos\theta}+\alpha\theta.
$$
Now we can choose $0\leq \theta_{m}\leq \frac{\pi}{p}$ such that $\frac{\pi}{p}-\theta_{m}\sim \frac{1}{\beta_{m}^{2/(1-\alpha)}}$. Then 
\begin{align*}
&\frac{1}{\binom{m+n-1}{m}}\frac{1}{\pi}\int_{0}^{\pi/p}\exp\{h(\theta)\}\frac{(|R(\theta)|+|R^{'}(\theta)|)d\theta}{[\sqrt{y_{l,m}^2t^2(\theta)-2y_{l,m}t(\theta)\cos B(\theta)+1}]^j}=\mathrm{I}+\mathrm{II}
\end{align*}
where
\begin{align*}
\mathrm{I}=&\frac{1}{\binom{m+n-1}{m}}\frac{1}{\pi}\int_{\theta_{m}}^{\pi/p}\exp\{h(\theta)\}\frac{(|R(\theta)|+|R^{'}(\theta)|)d\theta}{[\sqrt{y_{l,m}^2t^2(\theta)-2y_{l,m}t(\theta)\cos B(\theta)+1}]^j}\\
\mathrm{II}=&\frac{1}{\binom{m+n-1}{m}}\frac{1}{\pi}\int_{0}^{\theta_{m}}\exp\{h(\theta)\}\frac{(|R(\theta)|+|R^{'}(\theta)|)d\theta}{[\sqrt{y_{l,m}^2t^2(\theta)-2y_{l,m}t(\theta)\cos B(\theta)+1}]^j}\end{align*}
Note that $t(\theta)\sim R^{\theta}(\theta),\forall\theta\in(0,\theta_{m}), B(\theta)\to\alpha\theta, \forall \theta\in(0,\pi/p)$. Then we can show that $\mathrm{I},\mathrm{II}\to0$, the proof of which are exactly the same as $\phi_{m}^{(n)}$.

Now we are going to consider the asymptotic behaviour of residuals. When $l$ is even, then $f_{l,m}^{(n)}$ will have two residuals at $\eta_{1,m},\eta_{2,m}$. Applying Leibnitz's formula, we have 
\begin{equation}
\sum_{k=0}^{j-1}\binom{j-1}{k}\left(\frac{\xi^{n+m-1}}{(\xi-1)^{m+1}}\right)^{(j-k-1)}\left(\frac{(\xi-\xi_{m})^j}{[1-y_{l,m}\xi^{\alpha-l}(\xi-1)^l]^j}\right)^{(k)}.\label{lm}
\end{equation}

By assumption $\xi_{1,m}\sim\xi_{1,m}^{l-\alpha}\to0$ and $\xi_{2,m}\sim\frac{1}{(y_{l,m})^{1/\alpha}}\to\infty$,we know
$$
(\frac{\xi^{n+m-1}}{(\xi-1)^{m+1}})^{(j-k-1)}\Bigg|_{\xi=\eta_{1,m}}\sim(-1)^{m-1}m^{j-k-1}\eta_{1,m}^{n+m-j+k}
$$
and
\begin{align*}
(\frac{\xi^{n+m-1}}{(\xi-1)^{m+1}})^{(j-k-1)}\Bigg|_{\xi=\xi_{2,m}}\sim&(-1)^{j-k-1}(\frac{\xi_{2,m}}{\xi_{2,m}-1})^{m+j-k}
(m/\xi_{2,m})^{j-k-1}\xi_{2,m}^{n-j+k-1}.
\end{align*}
By mathematical induction, we can get the expression of coefficients in power series
$$
g(\xi)=\frac{1-x_{l,m}\xi^{\alpha-l}(\xi-1)^l}{\xi-\xi_{1,m}}=\sum_{i=1}^{\infty}\frac{b_{i}^1}{i!}(\xi-\xi_{1,m})^{i-1}
$$
and
$$
g(\xi)=\frac{1-x_{l,m}\xi^{\alpha-l}(\xi-1)^l}{\xi-\xi_{2,m}}=\sum_{i=1}^{\infty}\frac{b_{i}^2}{i!}(\xi-\xi_{2,m})^{i-1},
$$
where
$$
b^{1}_{i}=g^{(i)}(\xi_{1,m})=-y_{l,m}\alpha_{[k]}\xi_{1,m}^{\alpha-2k}(\frac{\xi_{1,m}-1}{\xi_{1,m}})^{l-k}Q^{k}_{1}(\xi_{1,m};\alpha,l)
$$
$$
b^{2}_{i}=g^{(i)}(\xi_{2,m})=-y_{l,m}\alpha_{[k]}\xi_{2,m}^{\alpha-2k}(\frac{\xi_{2,m}-1}{\xi_{2,m}})^{l-k}Q^{k}_{1}(\xi_{2,m};\alpha,l)
$$
and $Q_{1}^{1}(\xi;\alpha,l)=\alpha(\xi-1)+l$,
\begin{align*}
Q^{k}_{1}(\xi;\alpha,l)=&\xi(\xi-1)(Q^{k-1}_{1}(\xi;\alpha,l))^{'}
+[(\alpha-2k+2)(\xi-1)+l-k+1]Q^{k-1}_{1}(\xi;\alpha,l).
\end{align*}
Then by mathematical induction again, one can show that
\begin{align*}
b^{1}_{i}\sim&(-1)^{l-i-1}y_{l,m}\alpha_{[i]}(l-\alpha)_{(i)}\xi_{1,m}^{\alpha-l-i}
=(-1)^{l-2}y_{l,m}\alpha(1-\alpha)_{(i-1)}(l-\alpha)_{(i)}\xi_{1,m}^{\alpha-l-i}\\
b^{2}_{i}\sim&-\alpha_{[i]}y_{l,m}\xi^{\alpha-i}\Big|_{\xi=\xi_{2,m}}=(-1)^i\alpha(1-\alpha)_{(i-1)}y_{l,m}\xi_{2,m}^{\alpha-i}
\end{align*}
Then due to $\xi_{1,m}\sim (y_{l,m})^{1/(l-\alpha)}\to0$ and $\xi_{2,m}\sim 1/(y_{l,m})^{1/\alpha}\to\infty$, we have $F_{1}(\xi)=g_{1}^j(\xi)$ and $F_{2}(\xi)=g_{1}^j(\xi)$ can be expanded as
$$
F_{1}(\xi)=\sum_{v=0}^{\infty}\frac{A_{v+j}^{1}}{(v+j)!}(\xi-\xi_{1,m})^{v},\quad F_{2}(\xi)=\sum_{v=0}^{\infty}\frac{A_{v+j}^{2}}{(v+j)!}(\xi-\xi_{2,m})^{v}
$$
where as $m\to\infty$
\begin{align*}
A_{v+j}^{1}=&\sum_{i_{1}+\cdots+i_{j}=v+j}\binom{v+j}{i_{1}\cdots i_{j}}b^{1}_{i_{1}}\cdots b^{1}_{i_{j}}
\sim (-1)^{(l-2)j}(y_{l,m})^j \alpha^j(v+j)!\eta_{1,m}^{(\alpha-l)j-v-j}E_{v}^1(\alpha)\\
\sim&(-1)^{(l-2)j}\alpha^j(v+j)!\eta_{1,m}^{-v-j}E_{v}^1(\alpha),
\end{align*}
\begin{align*}
A_{v+j}^{2}=&\sum_{i_{1}+\cdots+i_{j}=v+j}\binom{v+j}{i_{1}\cdots i_{j}}b^{2}_{i_{1}}\cdots b^{2}_{i_{j}}
\sim(-1)^{v+j}\alpha^j(y_{l,m})^j(v+j)!\xi_{2,m}^{\alpha j-v-j}E_{v}^2(\alpha)\\
\sim&(-1)^{v+j}\alpha^j(v+j)!\xi_{2,m}^{-v-j}E_{v}^2(\alpha)
\end{align*}
and
\begin{align*}
E_{v}^1(\alpha)=&\sum_{i_{1}+\cdots+i_{j}=v+j}\prod_{u=1}^{j}\frac{(1-\alpha)_{(i_{u}-1)}(l-\alpha)_{(i_{u})}}{i_{u}!},\quad 
E_{v}^2(\alpha)=\sum_{i_{1}+\cdots+i_{j}=v+j}\prod_{u=1}^{j}\frac{(1-\alpha)_{(i_{u}-1)}}{i_{u}!}.
\end{align*}
Then $G(\xi)=\frac{1}{F(\xi)}$, by Lemma \ref{residue} we have
\begin{align*}
\mathcal{A}_{s}^{1,j}(\eta)=&\prod_{v=1}^{s}\frac{A^1_{\eta_{v}+j}}{A^1_{j}}\sim \frac{(\eta_{1}+j)!\cdots(\eta_{1}+j)!}{(j!)^s}\eta_{1,m}^{-k}\mathcal{D}_{s}^1(\eta)
\end{align*}
and
\begin{align*}
\mathcal{A}_{s}^{2,j}(\eta)=&\prod_{v=1}^{s}\frac{A^2_{\eta_{v}+j}}{A^2_{j}}\sim(-1)^k\frac{(\eta_{1}+j)!\cdots(\eta_{1}+j)!}{(j!)^s}\eta_{2,m}^{-k}\mathcal{D}^2_{s}(\eta)
\end{align*}
where
\begin{align*}
\mathcal{D}_{s}^1(\eta)=&\sum_{\eta\in\mathcal{P}_{s}(k)}\prod_{v=1}^{s}E_{v}^1(\alpha)\quad
\mathcal{D}_{s}^2(\eta)=\sum_{\eta\in\mathcal{P}_{s}(k)}\prod_{v=1}^{s}E_{v}^2(\alpha)
\end{align*}
By Lemma \ref{residue} and Corollary \ref{cresidue}, we know $G^{(k)}_{1}(\eta_{1,m})$ and $G^{(k)}_{2}(\eta_{2,m})$ satisfies
\begin{align*}
&G^{(k)}(\eta_{1,m})\sim \frac{j!}{A_{j}^1}\eta_{1,m}^{-k}E_{1}\sim (-1)^{(l-2)j}(\frac{1}{\alpha})^j\eta_{1,m}^{j-k}E_{1},\\
&G^{(k)}(\eta_{2,m})\sim (-1)^{k}\frac{j!}{A_{j}^2}\eta_{2,m}^{-k}E_{2}\sim (-1)^{k-j}(\frac{1}{\alpha})^j\eta_{2,m}^{j-k}E_{2}
\end{align*}
where
\begin{align*}
E_{1}=&k!\sum_{s=1}^{k}(-1)^s\mathcal{D}^1_{s}(\eta),\quad
E_{2}=k!\sum_{s=1}^{k}(-1)^s\mathcal{D}^2_{s}(\eta)
\end{align*}

Thus, we know
$$
(\frac{\xi^{n+m-1}}{(\xi-1)^{m+1}})^{(j-k-1)}\left(\frac{(\xi-\xi_{m})^j}{[1-y_{l,m}\xi^{\alpha-l}(\xi-1)^l]^j}\right)^{(k)}\Bigg|_{\xi=\xi_{1,m}}\sim(-1)^{m+(l-2)j-1}(\frac{1}{\alpha})^jE_{1}m^{j-k-1}\xi_{1,m}^{n+m}
$$
and
$$
(\frac{\xi^{n+m-1}}{(\xi-1)^{m+1}})^{(j-k-1)}\left(\frac{(\xi-\xi_{m})^j}{[1-y_{l,m}\xi^{\alpha-l}(\xi-1)^l]^j}\right)^{(k)}\Bigg|_{\xi=\xi_{2,m}}\sim(-1)^{m+k-j-1}(\frac{1}{\alpha})^jE_{2}m^{j-k-1}\xi_{2,m}^{n+m}.
$$
Therefore, the leading terms in (\ref{lk}) are the ones when $k=0$. So
\begin{align*}
&\frac{d^{j-1}}{d\xi^{j-1}}\left[\frac{\xi^{n+m-1}}{(\xi-1)^{m+1}}\frac{(\xi-\xi_{1,m})^j}{[1-y_{l,m}\xi^{\alpha-l}(\xi-1)^l]^j}\right]\Bigg|_{\xi=\xi_{1,m}}
\sim(-1)^{-m+(l-2)j-1}(1/\alpha)^j\xi_{1,m}^{n+m}m^{j-1}
\end{align*}
\begin{align*}
&\frac{d^{j-1}}{d\xi^{j-1}}\left[\frac{\xi^{n+m-1}}{(\xi-1)^{m+1}}\frac{(\xi-\xi_{2,m})^j}{[1-y_{l,m}\xi^{\alpha-l}(\xi-1)^l]^j}\right]\Bigg|_{\xi=\xi_{2,m}}
\sim-(1/\alpha)^j\xi_{2,m}^{n-1}(\frac{m}{\xi_{2,m}})^{j-1}(\frac{\xi_{2,m}}{\xi_{2,m}-1})^{m+j}
\end{align*}
Last, taking into account the factor $\frac{1}{\binom{m+n-1}{m}}$, we have
\begin{align*}
&\frac{1}{\binom{m+n-1}{m}}\frac{d^{j-1}}{d\xi^{j-1}}\left[\frac{\xi^{n+m-1}}{(\xi-1)^{m+1}}\frac{(\xi-\xi_{1,m})^j}{[1-x_{l,m}^{(n)}\xi^{\alpha-l}(\xi-1)^l]^j}\right]\Bigg|_{\xi=\xi_{1,m}}\\
\sim&(-1)^{-m+(l-2)j-1}\frac{(1/\alpha)^j}{(j-1)!}\xi_{1,m}^{n+m}\frac{1}{m^{n-j}}\to0
\end{align*}
\begin{align*}
&\frac{1}{\binom{m+n-1}{m}}\frac{1}{(j-1)!}\frac{d^{j-1}}{d\xi^{j-1}}\left[\frac{\xi^{n+m-1}}{(\xi-1)^{m+1}}\frac{(\xi-\xi_{2,m})^j}{[1-y_{l,m}\xi^{\alpha-l}(\xi-1)^l]^j}\right]\Bigg|_{\xi=\xi_{2,m}}\\
\sim&-\frac{(1/\alpha)^j}{(j-1)!}(\frac{m}{\xi_{2,m}})^{j-n}\frac{1}{(1-1/\xi_{2,m})^{m}}\to\infty.
\end{align*}
When $l$ is odd the proof can be carried out similarly, one will have
\begin{align*}
&\frac{1}{\binom{m+n-1}{m}}\frac{1}{(j-1)!}\frac{d^{j-1}}{d\xi^{j-1}}\left[\frac{\xi^{n+m-1}}{(\xi-1)^{m+1}}\frac{(\xi-\xi_{m})^j}{[1-y_{l,m}\xi^{\alpha-l}(\xi-1)^l]^j}\right]\Bigg|_{\xi=\xi_{m}}\\
\sim&-\frac{(1/\alpha)^j}{(j-1)!}(\frac{m}{\xi_{m}})^{j-n}\frac{1}{(1-1/\xi_{m})^{m}}\to\infty.
\end{align*}
The proof is completed.

\section{Proof of Theorem \ref{mdp}}
\begin{proof}
By Theorem \ref{asymptotic}, $\forall \lambda>0$, one will have 
\begin{align*}
\log \phi_{n}\sim & ny_{n}^{1/\alpha}\sim \lambda^{1/\alpha}\beta_{n}^{1/(1-\alpha)}\\
\log \phi_{l,n}\sim& n\frac{1}{\eta_{n}}\sim nx_{l,n}^{1/\alpha}\sim[\frac{\alpha(1-\alpha)_{(l-1)}}{l!}\lambda]^{1/\alpha}\beta_{n}^{1/(1-\alpha)}\\
\log \phi_{m}^{(n)}\sim& m(y_{m}^{(n)})^{1/\alpha}\sim \lambda^{1/\alpha}\beta_{m}^{1/(1-\alpha)}\\
\log \phi_{l,m}^{(n)}\sim& m\frac{1}{\xi_{m}}\sim m(x_{l,m}^{(n)})^{1/\alpha}\sim[\frac{\alpha(1-\alpha)_{(l-1)}}{l!}\lambda]^{1/\alpha}\beta_{m}^{1/(1-\alpha)}
\end{align*}
Thus,$$
\psi_{\lambda}=\psi^{(n)}(\lambda)=\begin{cases}
\lambda^{1/\alpha} & \lambda>0\\
0&\lambda\leq0
\end{cases}
$$
and
$$
\psi_{l}(\lambda)=\psi_{l}^{(n)}(\lambda)=\begin{cases}
(\frac{\alpha(1-\alpha)_{(l-1)}}{l!}\lambda)^{1/\alpha} & \lambda>0\\
0&\lambda\leq0
\end{cases}
$$
Then by G\"{a}rtner-Ellis theorem, the proof is completed.
\end{proof}

\section{Appendix}
\textbf{[Proof of Lemma \ref{ss}]}:
\begin{proof}
First we consider the equation on the real line
$$
1-y_{l,n}x^{\alpha-l}(x-1)^l=0,\quad \alpha\in(0,1),x\in\mathbb{R},
$$
which is equivalent to
$
x^{\alpha-l}(x-1)^l=\frac{1}{y_{l,n}}.
$
Therefore, consider function
$
h(x)=x^{\alpha-l}(x-1)^l, x\in\mathbb{R}.
$
But there is definitely no solution on $\mathbb{R}^{-}$, for otherwise $x^{\alpha}$ is a pure complex number, which does not fit the equation. So we only need to consider $\mathbb{R}^{+}$. Moreover, whether $l$ is even or odd is crucial, for instance, when $l=4,\alpha=0.5$ and $l=3,\alpha=0.5$, we have the graph as in Figure \ref{el} and Figure \ref{el}.
\begin{figure}
  \centering
  \includegraphics[width=0.4\textwidth,height=3in]{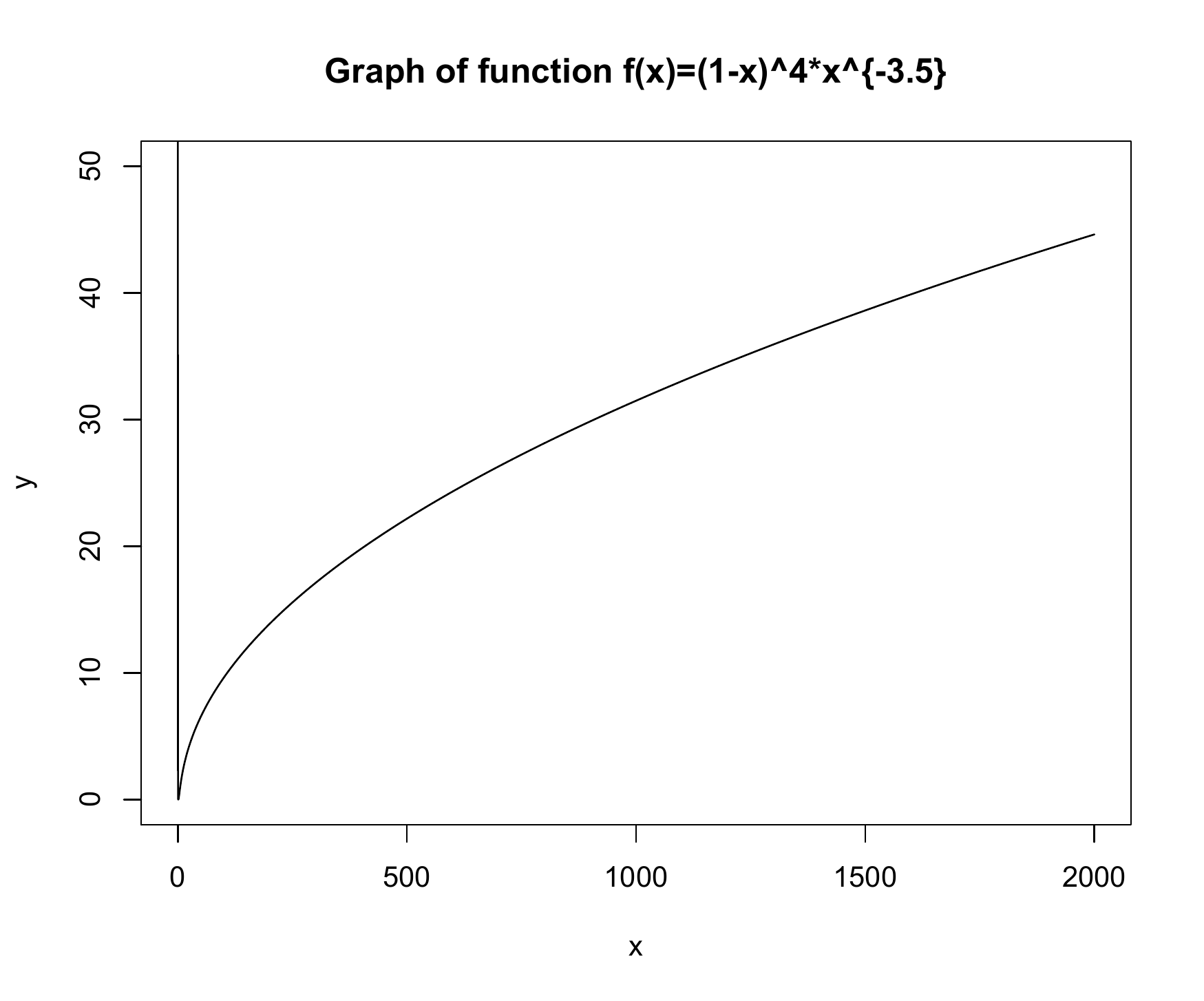}
    \includegraphics[width=0.4\textwidth,height=3in]{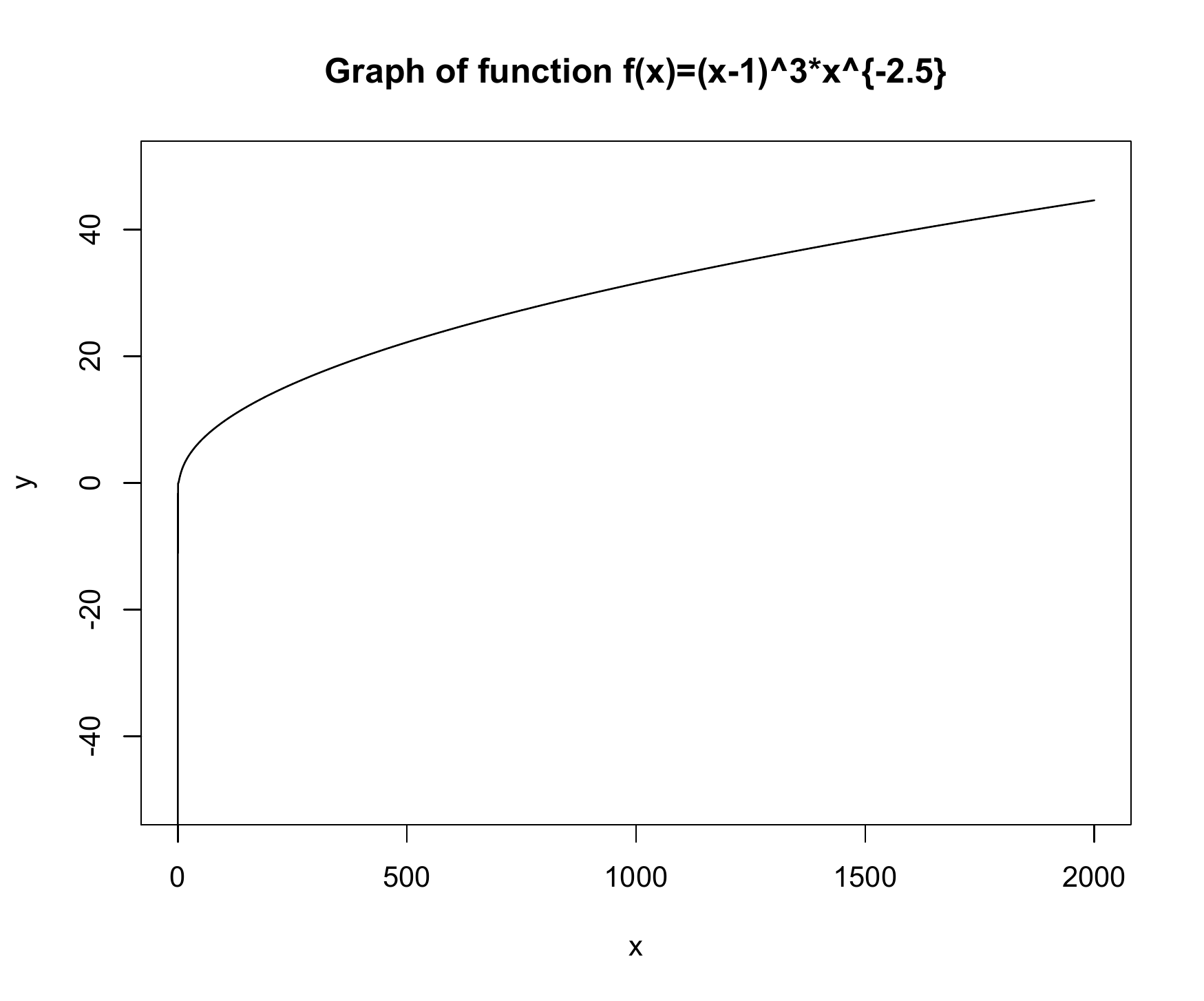}
   \caption{Function for even $l$ and odd $l$}
   \label{el}
\end{figure}

Let us find the derivative of $h(x)$.
$$
h^{'}(x)=\alpha x^{\alpha-l-1}(x-1)^{l-1}(x+\frac{l-\alpha}{\alpha})
$$
When $l$ is even, we know
$$
h^{'}(x)=\begin{cases}
>0&\mbox{ when } x>1\\
<0&\mbox{ when } 0<x<1
\end{cases}
$$
When $l$ is odd, we know $h^{'}(x)>0,x\in\mathbb{R}^{+}$. Therefore, when $l$ is even, there are always two real solutions $0<\xi_{1,n}<1$ and $\xi_{2,n}>1$. When $l$ is odd, there is only one real solution $\xi_{n}>1$. We also know that when $l$ is even,
$$
\xi_{1,n}^{\alpha-l}(\xi_{1,n}-1)^{l}=\frac{1}{y_{l,n}},\quad \xi_{2,n}^{\alpha-l}(\xi_{2,n}-1)^{l}=\frac{1}{y_{l,n}}.
$$
Since $1/y_{l,n}\to\infty$, then
$
\xi_{1,n}\sim y_{l,n}^{1/(l-\alpha)},\quad \xi_{2,n}\sim \frac{1}{y_{l,n}^{1/\alpha}}.
$
Similarly, when $l$ is odd, we have
$
\xi_{n}^{\alpha-l}(\xi_{n}-1)^{l}=\frac{1}{y_{l,n}}
$
and
$
\xi_{n}\sim \frac{1}{y_{l,n}^{1/\alpha}}.
$
Now we are going to show that the equation has only real solutions when $n$ is big enough. Since $\xi=0$ is not the solution. Then under the transformation $\eta=\frac{1}{\xi}$, the solution set of the following two equations has a one-to-one correspondence.
$$
1-y_{l,n}\xi^{\alpha-l}(\xi-1)^l=0\quad 1-y_{l,n}(1-\eta)^l\eta^{-\alpha}=0
$$
Thus, we only need to consider
$
1-y_{l,n}(1-\eta)^l\eta^{-\alpha}=0.
$
Rearranging the above equation, we have
$
\eta^{\alpha}=y_{l,n}(1-\eta)^l
$
If $\eta$ is a solution, then one can easily check that so is the conjugate $\bar{\eta}$. Therefore, we only need to consider the solutions on the upper half plane, where $\eta=Re^{i\theta}, 0\leq \theta<\pi$. Suppose that $\eta_{0,n}=R_{0,n}e^{i\theta_{0,n}},0\leq\theta_{0,n}<\pi$ is a solution. Then
$
|\eta_{0,n}|^\alpha=y_{l,n}|1-\eta_{0,n}|^l
$
and
$
R_{0,n}^\alpha=y_{l,n}(1+R_{0,n}^2-2R_{0,n}\cos\theta_{0,n})^{l/2}
$
Because $y_{l,n}\to0$ as $n\to\infty$. If $\{R_{0,n},n\geq1\}$ is bounded, then $R_{0,n}\to0$, and if $\{R_{0,n},n\geq1\}$ is unbounded, then $R_{0,n}\to\infty$. Then we divide the solution set into two parts:
\begin{align*}
S_{1}=&\{\eta_{0,n}\mid \mathrm{Re}(\eta_{0,n})\leq1,n\geq1\}\\
S_{2}=&\{\eta_{0,n}\mid \mathrm{Re}(\eta_{0,n})>1,n\geq1\}
\end{align*}
\begin{enumerate}
\item When $\eta_{0,n}\in S_{1}$, then $\mathrm{Re}(1-\eta_{0,n})\geq0$, and $\mathrm{Im}(1-\eta_{0,n})=\mathrm{Im}(-\eta_{0,n})<0$. Hence
    $$
\arg(1-\eta_{0,n})=\arctan\frac{\mathrm{Im}(1-\eta_{0,n})}{\mathrm{Re}(1-\eta_{0,n})}=\arctan\left(\frac{-R_{0,n}\sin\theta_{0,n}}{1-R_{0,n}\cos\theta_{0,n}}\right)
    $$
    Moreover, because $\eta_{0,n}^\alpha=y_{l,n}(1-\eta_{0,n})^l$, we know
    $$
    \alpha\arg{(\eta_{0,n})}=l\arg(1-\eta_{0,n})+2k_{n}\pi,\quad |k_{n}|\leq\frac{2l+1}{2}.
    $$
    Then
    $$
    \alpha\theta_{0,n}=l\arctan\left(\frac{-R_{0,n}\sin\theta_{0,n}}{1-R_{0,n}\cos\theta_{0,n}}\right)+2k_{n}\pi.
    $$
    For $\eta_{0,n}\in S_{1}$, then $\mathrm{Re}(\eta_{0,n})\leq1$, so $R_{0,n}\cos\theta_{0,n}\leq1$. If $R_{0,n}\to\infty$, then $\theta_{0,n}\to\frac{\pi}{2}$. Because $k_{n}$ is bounded, we can definitely pick a subsequence $k_{n_{t}}$ such that as $t\to\infty$
    $
    k_{n_{t}}\to k^{'}
    $
    and we also know
    $
    R_{0,n_{t}}\to\infty\quad \theta_{0,n_{t}}\to\frac{\pi}{2}.
    $
    Letting $t\to\infty$, we have
    $
    \alpha\frac{\pi}{2}=l\frac{\pi}{2}+2k^{'}\pi
    $
    Then
    $
    \alpha=l+4k^{'}
    $
    which is a contradiction for the left hand side is not an integer but the right hand side is integer.
    Thus, for $\eta_{0,n}\in S_{1}$ and $\eta_{0,n}=R_{0,n}e^{i\theta_{0,n}}$, we know $R_{0,n}\to0$. Because
    $$
    \frac{\sin\theta_{0,n}R_{0,n}}{1-R_{0,n}\cos\theta_{0,n}}\leq\frac{R_{0,n}}{1-R_{0,n}}\to0.
    $$
    then there is a positive integer $N_{0}$ such that for $n\geq N_{0}$ we have
    \begin{align*}
    |2k_{n}\pi|=&|\alpha\theta_{0,n}-l\arctan(\frac{\sin\theta_{0,n}R_{0,n}}{1-R_{0,n}\cos\theta_{0,n}})|\\
    \leq&|\alpha\theta_{0,n}|+|l\arctan(\frac{\sin\theta_{0,n}R_{0,n}}{1-R_{0,n}\cos\theta_{0,n}})|\\
   <&|\alpha\theta_{0,n}|+\frac{\pi}{2}<\pi+\frac{\pi}{2}=\frac{3\pi}{2}
    \end{align*}
    Then we know when $n\geq N_{0}$,  $k_{n}=0$. Then when $n\geq N_{0}$
    $$
    \alpha\theta_{0,n}=l\arctan\left(\frac{-R_{0,n}\sin\theta_{0,n}}{1-R_{0,n}\cos\theta_{0,n}}\right)\leq0
    $$
    But the left hand side $\alpha\theta_{0,n}\geq0$. Therefore, $\theta_{0,n}=0$. Hence, $S_{1}$ has only real number.
    \item When $\eta_{0,n}\in S_{2}$, we know $\mathrm{Re}(\eta_{0,n}>1)$, then $\mathrm{Re}(1-\eta_{0,n})<0$ and $\mathrm{Im}(1-\eta_{0,n})=-\mathrm{Im}(\eta_{0,n})<0$. Hence
        $$
        \arg(1-\eta_{0,n})=-\pi+\arctan(\frac{-R_{0,n}\sin\theta_{0,n}}{1-R_{0,n}\cos\theta_{0,n}})
        $$
    So
    $$
    \alpha \arg(\eta_{0,n})=l\arg(1-\eta_{0,n})+2k_{n}\pi, \quad |k_{n}|\leq\frac{2l+1}{2}
    $$
    and
    $$
    \alpha\theta_{0,n}=-l\pi+l\arctan(\frac{-R_{0,n}\sin\theta_{0,n}}{1-R_{0,n}\cos\theta_{0,n}})+2k_{n}\pi
    $$
    Since $1<\mathrm{Re}(\eta_{0,n})\leq|\eta_{0,n}|=R_{0,n}$, then $R_{0,n}\to\infty$ as $n\to\infty$. So
    \begin{align*}
   &\alpha\theta_{0,n}=-l\pi+l\arctan\frac{\sin\theta_{0,n}}{\cos\theta_{0,n}-\frac{1}{R_{0,n}}}+2k_{n}\pi\\
    =&-l\pi+l\arctan(\tan\theta_{0,n})+2k_{n}\pi+l(\arctan\frac{\sin\theta_{0,n}}{\cos\theta_{0,n}-\frac{1}{R_{0,n}}}-\arctan\tan\theta_{0,n})\\
    =&-l\pi+l\theta_{0,n}+2k_{n}\pi+l(\arctan\frac{\sin\theta_{0,n}}{\cos\theta_{0,n}-\frac{1}{R_{0,n}}}-\arctan\tan\theta_{0,n})\\
    =&-l\pi+l\theta_{0,n}+l\arctan(\frac{\sin\theta_{0,n}}{1-\cos\theta_{0,n}/R_{0,n}}\frac{1}{R_{0,n}})
    \end{align*}
    where we have use the formula
    $$
    \arctan\alpha-\arctan\beta=\arctan(\frac{\alpha-\beta}{1+\alpha\beta})
    $$
    There is a positive integer $N_{0}$ such that for $n\geq N_{0}$
    $$
    |\frac{\sin\theta_{0,n}}{1-\cos\theta_{0,n}/R_{0,n}}|\leq|\frac{1}{1-\frac{1}{R_{0,n}}}|<2.
    $$
    Notice that $0<\theta_{0,n}<\frac{\pi}{2}$ and $|k_{n}|\leq\frac{2l+1}{2}$.There is a sequence $n_{t}$ such that as $t\to\infty$
    $
    k_{n_{t}}\to k^{'},\quad \theta_{0,n_{t}}\to \theta.
    $
    We claim that $\theta=0$ otherwise $\theta>0$. We have
    \begin{align*}
    \alpha\theta=&-l\pi+l\theta+2k^{'}\pi\\
    (\alpha-l)\theta=&(2k^{'}-l)\pi\\
    (\alpha-l)\frac{\theta}{\pi}=&(2k^{'}-l)
    \end{align*}
    We know $0<\frac{\theta}{\pi}<\frac{1}{2}$. We can choose a rational number $\frac{p}{l}$ such that
    $$
    0<\frac{\theta}{\pi}-\frac{p}{l}<\frac{1}{l}.
    $$
    one possible way to get the above rational numbers is to divide $[0,1]$ into $l$ equal parts. $\frac{\theta}{\pi}$ will fall in one of the small interval. We choose the left end point of the interval,denoted as $\frac{p}{l}$, which is what we need. We can thus show that the left hand side of $(\alpha-l)\frac{\theta}{\pi}=(2k^{'}-l)$ is not integer but the right hand side is integer, which is a contradiction. Therefore, the claim is shown. Now we show that
    $
     (\alpha-l)\frac{\theta}{\pi}
    $
    is not an integer. To this end, let $r_{l}=\frac{\theta}{\pi}-\frac{p}{l}$ we have $0<r_{l}<\frac{1}{l}$ and
    $$
    (\alpha-l)\frac{\theta}{\pi}=\alpha\frac{\theta}{\pi}-l\frac{\theta}{\pi}
    =\alpha\frac{\theta}{\pi}-l(\frac{p}{l}+r_{l})=\alpha\frac{\theta}{\pi}-lr_{l}-p
    $$
    Since $0<lr_{l}<1$ and $0<\alpha\frac{\theta}{\pi}<1$, we know $\alpha\frac{\theta}{\pi}-lr_{l}$ is not an integer, but $p$ is an integer. So $(\alpha-l)\frac{\theta}{\pi}$ is not an integer. Our claim is thus proved.

    Therefore, $\theta_{0,n}\to0$ and
    $
    2k^{'}=l.
    $
    Thus all the subsequence of $k_{n}$ converges to $\frac{l}{2}$. Therefore, because $k_{n}$ is an integer sequence, we have
    $
    k_{n}=\frac{l}{2}\quad \forall n\geq N_{0}.
    $
    So
    $
    \alpha\theta_{0,n}=-l\pi+l\theta_{0,n}+2k_{n}\pi=-l\pi+l\theta_{0,n}+l\pi=l\theta_{0,n}.
    $
    Then $\alpha\theta_{0,n}=l\theta_{0,n}$, then $\theta_{0,n}=0\quad \forall n\geq N_{0}.$ Thus $S_{2}$ has only real numbers.
\end{enumerate}
\end{proof}
\textbf{Proof of Lemma \ref{residue}}:
\begin{proof}
We apply Leibnitz's formula to equation
$F(\xi)G(\xi)=1$, then
$$
0=\sum_{p=0}^{k}\binom{k}{p}F^{(p)}(\xi)G^{(k-p)}(\xi).
$$
So we have a recursive relation
$$
G^{(k)}(\xi)=-\sum_{p=1}^{k}\binom{k}{p}\frac{F^{(p)}(\xi)}{F(\xi)}G^{(k-p)}(\xi).
$$
Then
$$
G^{(k)}(0)=-\sum_{p=1}^{k}\frac{k!j!}{(p+j)!}\frac{A_{p+j}}{A_{j}}\frac{1}{(k-p)!}G^{(k-p)}(0).
$$
We will use mathematical induction to show this formula. When $k=1$,
$$
G^{'}(0)=-\frac{F^{'}(0)}{F(0)}G(0)=-\frac{j!}{(j+1)!}\frac{A_{j+1}}{A_{j}}\frac{j!}{A_{j}}
$$
the formula is true. Now we assume that 
$$
G^{(k)}(0)=\left[\sum_{s=1}^k(-1)^s\sum_{\eta\in\mathcal{P}_{s}(k)}\frac{(j!)^sk!}{(j+\eta_{1})!\cdots(j+\eta_{s})!}\mathcal{A}_{s}^j(\eta)\right]\frac{j!}{A_{j}},
$$
Then
\begin{align*}
&G^{(k+1)}(0)=-\sum_{p=1}^{k}\frac{(k+1)!j!}{(p+j)!}\frac{A_{p+j}}{A_{j}}\frac{1}{(k+1-p)!}G^{(k+1-p)}(0)
-\frac{(k+1)!j!}{(k+1+j)!}\frac{A_{k+1+j}}{A_{j}}\frac{j!}{A_{j}}\\
=&-\sum_{p=1}^{k}\frac{(k+1)!j!}{(p+j)!}\frac{A_{p+j}}{A_{j}}\frac{1}{(k+1-p)!}G^{(k+1-p)}(0)-\sum_{\eta\in\mathcal{P}_{1}(k+1)}\frac{j!(k+1)!}{(j+\eta_{1})!}\mathcal{A}_{1}^j(\eta)\frac{j!}{A_{j}}
\end{align*}
Replacing  $G^{(k+1-p)}(0)$ by
$$
G^{(k+1-p)}(0)=\left[\sum_{s=1}^{k+1-p}(-1)^s\sum_{\eta\in\mathcal{P}_{s}(k+1-p)}\frac{(j!)^s(k+1-p)!}{(j+\eta_{1})!\cdots(j+\eta_{s})!}\mathcal{A}_{s}^j(\eta)\right]\frac{j!}{A_{j}},
$$
we have
\begin{align*}
&G^{(k+1)}(0)=-\sum_{\eta\in\mathcal{P}_{1}(k+1)}\frac{j!(k+1)!}{(j+\eta_{1})}\mathcal{A}_{1}^j(\eta)\frac{j!}{A_{j}}-\sum_{p=1}^{k}\frac{(k+1)!j!}{(p+j)!}\frac{A_{p+j}}{A_{j}}\frac{1}{(k+1-p)!}\\
&\left[\sum_{s=1}^{k+1-p}(-1)^s\sum_{\eta\in\mathcal{P}_{s}(k+1-p)}\frac{(j!)^s(k+1-p)!}{(j+\eta_{1})!\cdots(j+\eta_{s})!}\mathcal{A}_{s}^j(\eta)\right]\frac{j!}{A_{j}}\\
=&\left[\sum_{p=1}^{k}\sum_{s=1}^{k+1-p}(-1)^{s+1}\sum_{\eta\in\mathcal{P}_{s}(k+1-p)}\frac{j!^{s+1}(k+1)!}{(j+p)!(j+\eta_{1})!\cdots(j+\eta_{s})!}\frac{A_{j+p}}{A_{j}}\mathcal{A}_{s}^j\right]\frac{j!}{A_{j}}\\
&-\sum_{\eta\in\mathcal{P}_{1}(k+1)}\frac{j!(k+1)!}{(j+\eta_{1})}\mathcal{A}_{1}^j(\eta)\frac{j!}{A_{j}}\\
=&\left[\sum_{s=2}^{k+1}(-1)^{s}\sum_{\eta\in\mathcal{P}_{s}(k+1)}\frac{j!^s(k+1)!}{(j+\eta_{1})!\cdots(j+\eta_{s})!}\mathcal{A}_{s}^j\right]\frac{j!}{A_{j}}-\sum_{\eta\in\mathcal{P}_{1}(k+1)}\frac{j!(k+1)!}{(j+\eta_{1})}\mathcal{A}_{1}^j(\eta)\frac{j!}{A_{j}}\\
=&\left[\sum_{s=1}^{k+1}(-1)^{s}\sum_{\eta\in\mathcal{P}_{s}(k+1)}\frac{j!^s(k+1)!}{(j+\eta_{1})!\cdots(j+\eta_{s})!}\mathcal{A}_{s}^j\right]\frac{j!}{A_{j}}.
\end{align*}
The proof is completed.
\end{proof}

\bibliography{ref}{}
\bibliographystyle{siam}

\end{document}